\documentclass[12pt]{amsart}

\usepackage{amssymb}
\usepackage{amsthm}
\usepackage{amsmath}

\usepackage[all,dvips]{xy}

\theoremstyle{plain}
\newtheorem{thm}{Theorem}[section]
\theoremstyle{definition}
\newtheorem{defn}[thm]{Definition}
\theoremstyle{plain}
\newtheorem{prop}[thm]{Proposition}
\theoremstyle{definition}
\newtheorem{example}[thm]{Example}
\theoremstyle{remark}
\newtheorem{rem}[thm]{Remark}

\begin{document}

\title{The Point in Weak Semiprojectivity and AANR Compacta}
\author{Terry A. Loring}
\address{Department of Mathematics and Statistics, University of New Mexico,
Albuquerque, NM 87131, USA.}

\keywords{$C^{*}$-algebras, compactum, lifting, approximative retract.}
\subjclass[2000]{46L85}
\urladdr{http://www.math.unm.edu/\textasciitilde{}loring/}

\begin{abstract}
We initiate the study of pointed approximative absolute neighborhood
retracts. Our motivation is to generate examples of $C^{*}$-algebras
that behave in unexpected ways with respect to weak semiprojectivity.
We consider both weak semiprojectivity (WSP) and weak semiprojectivity
with respect to the class of unital $C^{*}$-algebras (WSP1). For
a non-unital $C^{*}$-algebra, these are different properties. 

One example shows a $C^{*}$-algebra $A$ can fail to be WSP while
its unitization $\widetilde{A}$ is WSP. Another example shows WSP1
is not closed under direct sums.

\end{abstract}

\maketitle

\section{Introduction}

The ``with or without a unit'' choice in $C^{*}$-algebras becomes
serious in the context of certain approximation problems for $C^{*}$-algebras.
We find that weak semiprojectivity for $C_{0}(X)$ in the commutative
category does not translate to a standard condition on $X$. Pointed
approximative absolute neighborhood retracts are introduced, PAANR
for short, as these are the $X$ for which $C_{0}(X)$ is weakly semiprojective.
Our main objective is to understand the commutative $C^{*}$-algebras
based on the spaces shown in Figures~\ref{fig:topologist's-sine-curve},
\ref{fig:two-sine-curves-Nasty} and \ref{fig:two-sine-curves-Nice}
and see what this tells us about weak semiprojectivity.

\begin{figure}
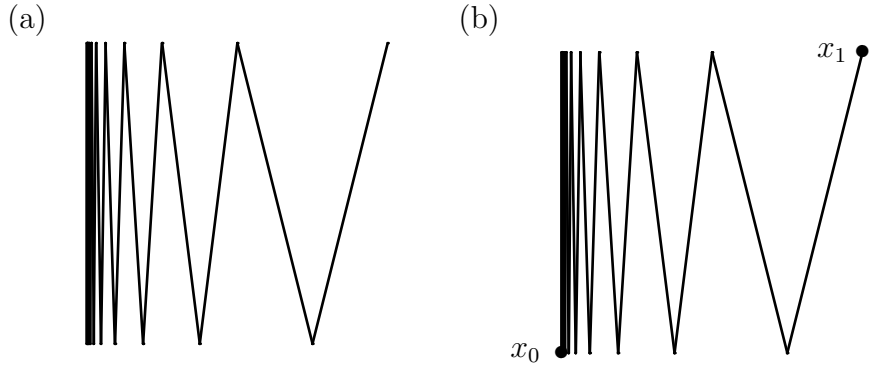

(a)%
\begin{minipage}[t]{0.4\columnwidth}%
\[
\xycompileto{eqn106}{\xygraph{
!~-{@{-}@[|(2.5)]}
!{<0cm,0cm>;<4.0cm,0.0cm>:<0.0cm,4.0cm>::}  
!{(0,1)}*{}="01"
!{(0,0)}*{}="00"
!{(1,1)}*{}="a1"
!{(0.75,0)}*{}="b1"
!{(0.5,1)}*{}="a2"
!{(0.375,0)}*{}="b2"
!{(0.25,1)}*{}="a3"
!{(0.1875,0)}*{}="b3"
!{(0.125,1)}*{}="a4"
!{(0.09375,0)}*{}="b4"
!{(0.0625,1)}*{}="a5"
!{(0.046875,0)}*{}="b5"
!{(0.03125,1)}*{}="a6"
!{(0.0234375,0)}*{}="b6"
!{(0.015625,1)}*{}="a7"
!{(0.01171875,0)}*{}="b7"
!{(0.0078125,1)}*{}="a8"
!{(0.005859375,0)}*{}="b8"
!{(0.00390625,1)}*{}="a9"
!{(0.0029296875,0)}*{}="b9"
!{(0.001953125,1)}*{}="a10"
!{(0.0014648438,0)}*{}="b10"
"00"-"01"
"a1"-"b1"
"b1"-"a2"
"a2"-"b2"
"b2"-"a3"
"a3"-"b3"
"b3"-"a4"
"a4"-"b4"
"b4"-"a5"
"a5"-"b5"
"b5"-"a6"
"a6"-"b6"
"b6"-"a7"
"a7"-"b7"
"b7"-"a8"
"a8"-"b8"
"b8"-"a9"
"a9"-"b9"
"b9"-"a10"
"a10"-"b10"
}}
\]
\end{minipage}\quad{}(b)%
\begin{minipage}[t]{0.4\columnwidth}%
\[
\xycompileto{eqn112}{\xygraph{
!~-{@{-}@[|(2.5)]}
!{<0cm,0cm>;<4.0cm,0.0cm>:<0.0cm,4.0cm>::}  
!{(0,0)}*{\bullet}
!{(-0.12,0)}*{x_0}
!{(1,1)}*{\bullet}
!{(0.90,1)}*{x_1}
!{(0,1)}*{}="01"
!{(0,0)}*{}="00"
!{(1,1)}*{}="a1"
!{(0.75,0)}*{}="b1"
!{(0.5,1)}*{}="a2"
!{(0.375,0)}*{}="b2"
!{(0.25,1)}*{}="a3"
!{(0.1875,0)}*{}="b3"
!{(0.125,1)}*{}="a4"
!{(0.09375,0)}*{}="b4"
!{(0.0625,1)}*{}="a5"
!{(0.046875,0)}*{}="b5"
!{(0.03125,1)}*{}="a6"
!{(0.0234375,0)}*{}="b6"
!{(0.015625,1)}*{}="a7"
!{(0.01171875,0)}*{}="b7"
!{(0.0078125,1)}*{}="a8"
!{(0.005859375,0)}*{}="b8"
!{(0.00390625,1)}*{}="a9"
!{(0.0029296875,0)}*{}="b9"
!{(0.001953125,1)}*{}="a10"
!{(0.0014648438,0)}*{}="b10"
"00"-"01"
"a1"-"b1"
"b1"-"a2"
"a2"-"b2"
"b2"-"a3"
"a3"-"b3"
"b3"-"a4"
"a4"-"b4"
"b4"-"a5"
"a5"-"b5"
"b5"-"a6"
"a6"-"b6"
"b6"-"a7"
"a7"-"b7"
"b7"-"a8"
"a8"-"b8"
"b8"-"a9"
"a9"-"b9"
"b9"-"a10"
"a10"-"b10"
}}
\]
\end{minipage}

\caption{(a): The topologist's sine curve $X$, which is an AANR, even an AAR.
See Theorem~\ref{pro:ARtestForAANR}. (b): This shows points $x_{0}$
and $x_{1}$ considered in various examples. The pointed compacta
$\left(X,x_{0}\right)$ and $\left(X,x_{1}\right)$ behave rather
differently.
\label{fig:topologist's-sine-curve}}
\end{figure}

Let $\alpha X$ be the one-point compactification of the locally compact
metrizable space $X$ with added point $\infty$. We find that
$\left(\alpha X,\infty\right)$ is a PAANR exactly when $C_{0}(X)$
is weakly semiprojective within commutative $C^{*}$-algebras. It is
possible for $\alpha X$ and
$\alpha Y$ to be homeomorphic with $\left(\alpha X,\infty\right)$
not a PAANR while $\left(\alpha Y,\infty\right)$ is a PAANR. This
is not a welcome phenomenon as it implies that weak semiprojectivity
lacks an expected closure property.

Starting from the more civilized topological condition that $\alpha X$
is to be an approximative absolute neighborhood retract (AANR) we
are lead to the study of weak semiprojectivity with respect to unital
$C^{*}$-algebras. This is a condition that applies to unital and
non-unital $C^{*}$-algebras, devolving to weak semiprojectivity in
the unital case. This condition also lacks an expected closure property;
it is not closed under direct sums.

The audience for this note is primarily $C^{*}$-algebraists working
in or near classification or shape theory. The section on PAANR spaces
is hoping to be attractive to a few topologists. That section and
the preceding section that reviews AANR spaces contain no mention
of $C^{*}$-algebras.

To motivate the definition of a PAANR, we take a moment to discuss
two important classes of morphisms between $C^{*}$-algebras and how
these can be constructed from pointed maps and proper maps. Recall
that a \emph{compactum} is a compact, metrizable space. We say
$\left(X,x_{0}\right)$ is a \emph{pointed compactum} when $x_{0}$
is a point in the compact metrizable space $X$. A \emph{pointed map}
$\gamma:\left(X,x_{0}\right)\rightarrow\left(Y,y_{0}\right)$ means
a continuous function $\gamma$ from $X$ to $Y$ such that
$\gamma(x_{0})=\gamma(y_{0})$. To a compactum $X$ we associate the
$C^{*}$-algebra $C(X)$, which is commutative, separable and unital.
Indeed all commutative, separable, unital $C^{*}$-algebras arise
this way, up to isomorphism. If we drop the unital requirement, we
get the slightly broader class of $C^{*}$-algebras of the form
\[
C_{0}\left(X,x_{0}\right)
=
\left\{ f:X\rightarrow\mathbb{C}
\left|\strut\, f\mbox{ is continuous, }f(x_{0})=0
\right.\right\} 
\]
where $\left(X,x_{0}\right)$ varies over the pointed compacta. Alternately,
we can describe these as $C_{0}\left(Y\right)$ where $Y$ is locally
compact and metrizable. But what of the morphisms?

The default choice of morphisms between $C^{*}$-algebras are the
$*$-homomorphisms. Between $C(X)$ and $C(Y)$ the unital
$*$-homomorphisms are of the form $f\mapsto f\circ\gamma$ for
a continuous map $\gamma:Y\rightarrow X$.
We miss out on the non-unital $*$-homomorphisms, but this turns out
to be a trivial matter. For example, when $Y$ is connected, the only
non-unital $*$-homomorphism is zero. But what of the non-unital case?

\textbf{Myth}. The $*$-homomorphisms from $C_{0}\left(X,x_{0}\right)$
to $C_{0}\left(Y,y_{0}\right)$ are all of the form $f\mapsto f\circ\gamma$
for a continuous, proper map $\gamma$ from $Y\setminus\{y_{0}\}$
to $X\setminus\{x_{0}\}$. 

\textbf{Fact}. Only the so-called proper $*$-homomorphisms arise
from proper maps, and there are lots of non-proper $*$-homomorphisms.
See\cite{ELP-pushBusby}.

The $*$-homomorphisms from $C_{0}\left(X,x_{0}\right)$ to
$C_{0}\left(Y,y_{0}\right)$ are all of the form
$f\mapsto f\circ\gamma$ for a pointed map
$\gamma:\left(X,x_{0}\right)\rightarrow\left(Y,y_{0}\right)$.
Recall that pointed requires $\gamma(x_{0})=\{y_{0}\}$ only. If we
also required $\gamma^{-1}(\{y_{0}\})=\{x_{0}\}$ \emph{then} we would
have, by restriction, a proper map from $Y\setminus\{y_{0}\}$ to
$X\setminus\{x_{0}\}$. 

We are focused on examples that show some odd behavior in $C^{*}$-algebras
and in getting down all the details on the relation between weak
semiprojectivity and (P)AANR compacta. The issue of when the AR, ANR, or
AANR property for compacta $X$ is sufficient to make
$C_{0}(X\setminus\{x_{0}\})$ projective, or $C(X)$ semiprojective or
weakly semiprojective, has been much researched lately. See the papers
of Chigogidze and Dranishnikov \cite{chigogidzeNonComARs},
S\o rensen and Thiel \cite{sorensen2011characterization},
and Enders \cite{enders2011characterization}.

All the spaces considered will be one-dimensional.  The reason for this
is that when $X$ is two-dimensional, the $C^*$-algebra $C(X)$ will fail
to be weakly semiprojective.  Indeed, the converse holds
\cite{chigogidzeNonComARs,sorensen2011characterization,enders2011characterization}.
We are considering spaces with potentially very aberant local structure, so
even ignoring interaction with $C^*$-algebras, the topology of a
one-dimensional space can be tricky \cite{cannon2006fundamental}.

\begin{figure}
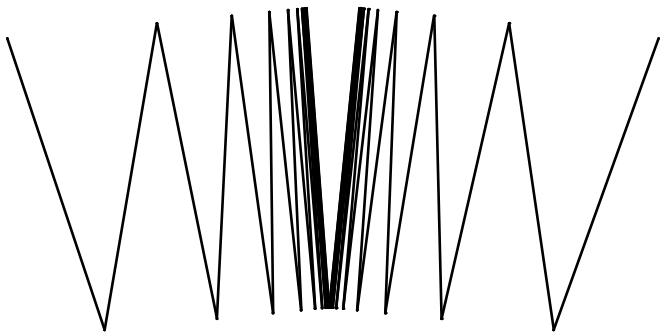

\[
\xycompileto{eqn107}{\xygraph{
!~-{@{-}@[|(2.5)]}
!{<0cm,0cm>;<3.98cm,-0.4cm>:<0.4cm,3.98cm>::}  
!{(0,1)}*{}="01"
!{(0,0)}*{}="00"
!{(1,1)}*{}="a1"
!{(0.75,0)}*{}="b1"
!{(0.5,1)}*{}="a2"
!{(0.375,0)}*{}="b2"
!{(0.25,1)}*{}="a3"
!{(0.1875,0)}*{}="b3"
!{(0.125,1)}*{}="a4"
!{(0.09375,0)}*{}="b4"
!{(0.0625,1)}*{}="a5"
!{(0.046875,0)}*{}="b5"
!{(0.03125,1)}*{}="a6"
!{(0.0234375,0)}*{}="b6"
!{(0.015625,1)}*{}="a7"
!{(0.01171875,0)}*{}="b7"
!{(0.0078125,1)}*{}="a8"
!{(0.005859375,0)}*{}="b8"
!{(0.00390625,1)}*{}="a9"
!{(0.0029296875,0)}*{}="b9"
!{(0.001953125,1)}*{}="a10"
!{(0.0014648438,0)}*{}="b10"
!{<0cm,0cm>;<-3.98cm,-0.4cm>:<-0.3cm,3.98cm>::}   
!{(0,1)}*{}="z01"
!{(0,0)}*{}="z00"
!{(1,1)}*{}="za1"
!{(0.75,0)}*{}="zb1"
!{(0.5,1)}*{}="za2"
!{(0.375,0)}*{}="zb2"
!{(0.25,1)}*{}="za3"
!{(0.1875,0)}*{}="zb3"
!{(0.125,1)}*{}="za4"
!{(0.09375,0)}*{}="zb4"
!{(0.0625,1)}*{}="za5"
!{(0.046875,0)}*{}="zb5"
!{(0.03125,1)}*{}="za6"
!{(0.0234375,0)}*{}="zb6"
!{(0.015625,1)}*{}="za7"
!{(0.01171875,0)}*{}="zb7"
!{(0.0078125,1)}*{}="za8"
!{(0.005859375,0)}*{}="zb8"
!{(0.00390625,1)}*{}="za9"
!{(0.0029296875,0)}*{}="zb9"
!{(0.001953125,1)}*{}="za10"
!{(0.0014648438,0)}*{}="zb10"
"00"-"01" "z00"-"z01" 
"a1"-"b1" "za1"-"zb1"
"b1"-"a2" "zb1"-"za2"
"a2"-"b2" "za2"-"zb2"
"b2"-"a3" "zb2"-"za3"
"a3"-"b3" "za3"-"zb3" 
"b3"-"a4" "zb3"-"za4" 
"a4"-"b4" "za4"-"zb4"
"b4"-"a5" "zb4"-"za5"
"a5"-"b5" "za5"-"zb5"
"b5"-"a6" "zb5"-"za6"
"a6"-"b6" "za6"-"zb6"
"b6"-"a7" "zb6"-"za7"
"a7"-"b7" "za7"-"zb7"
"b7"-"a8" "zb7"-"za8"
"a8"-"b8" "za8"-"zb8"
"b8"-"a9" "zb8"-"za9"
"a9"-"b9" "za9"-"zb9"
"b9"-"a10" "zb9"-"za10"
"a10"-"b10" "za10"-"zb10"
}}
\]\caption{Two copies of the topologist's sine curve, attached at a point so
that the result is not an AANR. See Theorem~\ref{pro:AANRviaApproxSplit}.
\label{fig:two-sine-curves-Nasty}}
\end{figure}

\begin{figure}
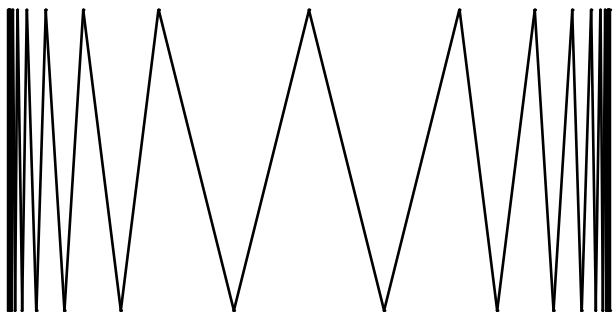

\[
\xycompileto{eqn108}{\xygraph{
!~-{@{-}@[|(2.5)]}
!{<0cm,0cm>;<4.00cm,-0cm>:<0cm,4.00cm>::}  
!{(0,1)}*{}="01"
!{(0,0)}*{}="00"
!{(1,1)}*{}="a1"
!{(0.75,0)}*{}="b1"
!{(0.5,1)}*{}="a2"
!{(0.375,0)}*{}="b2"
!{(0.25,1)}*{}="a3"
!{(0.1875,0)}*{}="b3"
!{(0.125,1)}*{}="a4"
!{(0.09375,0)}*{}="b4"
!{(0.0625,1)}*{}="a5"
!{(0.046875,0)}*{}="b5"
!{(0.03125,1)}*{}="a6"
!{(0.0234375,0)}*{}="b6"
!{(0.015625,1)}*{}="a7"
!{(0.01171875,0)}*{}="b7"
!{(0.0078125,1)}*{}="a8"
!{(0.005859375,0)}*{}="b8"
!{(0.00390625,1)}*{}="a9"
!{(0.0029296875,0)}*{}="b9"
!{(0.001953125,1)}*{}="a10"
!{(0.0014648438,0)}*{}="b10"
!{<8.00cm,0cm>+<0.0cm,0cm>;<8.00cm,0cm>+<-4.00cm,-0cm>:<8.00cm,0cm>+<-0cm,4.00cm>::}   
!{(0,1)}*{}="z01"
!{(0,0)}*{}="z00"
!{(1,1)}*{}="za1"
!{(0.75,0)}*{}="zb1"
!{(0.5,1)}*{}="za2"
!{(0.375,0)}*{}="zb2"
!{(0.25,1)}*{}="za3"
!{(0.1875,0)}*{}="zb3"
!{(0.125,1)}*{}="za4"
!{(0.09375,0)}*{}="zb4"
!{(0.0625,1)}*{}="za5"
!{(0.046875,0)}*{}="zb5"
!{(0.03125,1)}*{}="za6"
!{(0.0234375,0)}*{}="zb6"
!{(0.015625,1)}*{}="za7"
!{(0.01171875,0)}*{}="zb7"
!{(0.0078125,1)}*{}="za8"
!{(0.005859375,0)}*{}="zb8"
!{(0.00390625,1)}*{}="za9"
!{(0.0029296875,0)}*{}="zb9"
!{(0.001953125,1)}*{}="za10"
!{(0.0014648438,0)}*{}="zb10"
"00"-"01" "z00"-"z01" 
"a1"-"b1" "za1"-"zb1"
"b1"-"a2" "zb1"-"za2"
"a2"-"b2" "za2"-"zb2"
"b2"-"a3" "zb2"-"za3"
"a3"-"b3" "za3"-"zb3" 
"b3"-"a4" "zb3"-"za4" 
"a4"-"b4" "za4"-"zb4"
"b4"-"a5" "zb4"-"za5"
"a5"-"b5" "za5"-"zb5"
"b5"-"a6" "zb5"-"za6"
"a6"-"b6" "za6"-"zb6"
"b6"-"a7" "zb6"-"za7"
"a7"-"b7" "za7"-"zb7"
"b7"-"a8" "zb7"-"za8"
"a8"-"b8" "za8"-"zb8"
"b8"-"a9" "zb8"-"za9"
"a9"-"b9" "za9"-"zb9"
"b9"-"a10" "zb9"-"za10"
"a10"-"b10" "za10"-"zb10"
}}
\]\caption{Two copies of the topologist's sine curve, attached at
a point so that the result is an AANR. See
Theorem~\ref{pro:ARtestForAANR}.
\label{fig:two-sine-curves-Nice}}
\end{figure}

Whenever looking at forms of semiprojectivity, a strong motivation
is potential applications in shape theory. For a survey of shape theory
of spaces, with even some remarks about shape theory for $C^{*}$-algebras,
see \cite{mardesic2001history}. For a treatment of shape theory for
$C^{*}$-algebras that connects it with $E$ -theory, see
\cite{DadarlatShapeAndE},

The definition of PAANR, and some basic results, circulated in an
early version of \cite{LoringWeaklyProjective}. In final form, that
paper focused on a new property for $C^{*}$-algebras, being weakly
projective, and the related topological concept of being a pointed
approximative absolute retract (PAAR). Weak semiprojectivity has
been studied for some time
\cite{ELP-anticommutation,HadwinLiApproxLift,SpielbergWeakSemiprojectivity},
having been introduced at least as early as 1997 in
\cite{Loring-lifting-perturbing}. 

The author thanks Adam S\o rensen and Hannes Thiel for feedback on
the exposition of this work and many discussions related to shape
theory.

\section{AANR spaces}
\label{sec:AANR-spaces}

We start with a careful review of approximative absolute neighborhood
retract (AANR) in the sense of \cite{ClappAANR}. We are especially
interested in an equivalent formulation, following ideas from
\cite{Blackadar-shape-theory}, that translates to a lifting problem
in $C^{*}$-algebras. 

\begin{defn}
\label{def:AANR}
A compactum $X$ is an \emph{approximative absolute
neighborhood retract (AANR)} if, for every homeomorphic embedding
$\theta:X\rightarrow Y$ of $X$ into a compact metric space $(Y,d)$,
and for every $\epsilon>0$, there exists $\delta>0$ and a continuous
function $r:U_{\delta}\rightarrow X$ so that
\[
d(r\circ\theta(x),x)\leq\epsilon
\]
for all $x$ in $X,$ where 
\begin{equation}
U_{\delta}=\left\{ y\in Y\left|\, d(y,\theta(X))\leq\delta\right.\right\} .
\label{eq:def-of-U_delta}
\end{equation}

\end{defn}
Clapp asks only that $r$ be defined on a neighborhood $N(\epsilon)$
of $\theta(X)$, but this is equivalent since every open set containing
the compact set $\theta(X)$ contains some $U_{\delta},$ and each
$U_{\delta}$ contains the open neighborhood
\[
\left\{ y\in Y\left|\, d(y,\theta(X))<\delta\right.\right\} .
\]
We can gain flexibility in applying the AANR property by allowing
for more general decreasing sets. We also downplay the metric on $X$
insisting only that we have uniform convergence. It is key here that
$X$ be compact so that we have uniform equivalence of any two compatible
metrics and so the uniform convergence in (\ref{eq:unif_to_id}) does
not depend on the choice of metric on $X$.

\begin{prop}
\label{pro:AANRviaApproxSplit} 
A compactum $X$ is an AANR if, and
only if, for every continuous embedding $\theta:X\rightarrow Y$ of
$X$ into a compactum $Y$, and for every sequence 
$Y_{1}\supseteq Y_{2}\supseteq\cdots$ of closed subsets of $Y$
with $\bigcap Y_{n}=\theta(X),$ there exists a sequence of
continuous functions $r_{n}:Y_{n}\rightarrow X$ so that
\begin{equation}
\lim_{n\rightarrow\infty}r_{n}(\theta(x))=x
\label{eq:unif_to_id}
\end{equation}
uniformly over $x$ in $X.$ 
\end{prop}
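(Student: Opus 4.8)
The plan is to prove the two implications separately, treating the uniform convergence in (\ref{eq:unif_to_id}) and the $\epsilon$-closeness of Definition~\ref{def:AANR} as interchangeable: since $X$ is compact, any two compatible metrics on $X$ are uniformly equivalent and $\theta$ restricts to a uniform homeomorphism onto $\theta(X)$, so ``$r_n(\theta(x))$ is uniformly close to $x$'' may be measured with any compatible metric, exactly as noted in the remarks preceding the statement. I will write $d_X$ for a fixed metric on $X$.

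For the forward implication, suppose $X$ is an AANR and fix an embedding $\theta:X\to Y$ and a decreasing sequence $Y_1\supseteq Y_2\supseteq\cdots$ of closed (hence compact) subsets of the compactum $Y$ with $\bigcap_n Y_n=\theta(X)$. The key elementary fact is that such a sequence is cofinal in the neighborhood filter of $\theta(X)$: for every open $U\supseteq\theta(X)$ the sets $Y_n\setminus U$ are compact, decreasing, and have empty intersection, so by the finite intersection property $Y_n\subseteq U$ for all large $n$. Now apply the AANR property with $\epsilon=1/k$ to obtain $\delta_k>0$ and a continuous $r^{(k)}:U_{\delta_k}\to X$ with $\sup_{x}d_X(r^{(k)}(\theta(x)),x)\le 1/k$. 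Using cofinality with $U=\{y:d(y,\theta(X))<\delta_k\}\subseteq U_{\delta_k}$, choose $N_1<N_2<\cdots$ so that $Y_n\subseteq U_{\delta_k}$ whenever $n\ge N_k$. Define $r_n$ to be a constant map for $n<N_1$ and $r_n=r^{(k)}|_{Y_n}$ when $N_k\le n<N_{k+1}$. For $n\ge N_k$ one then has $\sup_x d_X(r_n(\theta(x)),x)\le 1/k$, which yields the required uniform convergence.

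For the converse, assume the sequence condition and fix an embedding $\theta:X\to(Y,d)$ and $\epsilon>0$. Apply the hypothesis to the particular sequence $Y_n=\{y\in Y:d(y,\theta(X))\le 1/n\}$; these are closed, decreasing, and satisfy $\bigcap_n Y_n=\theta(X)$ because $\theta(X)$ is closed, so $d(y,\theta(X))=0$ forces $y\in\theta(X)$. We obtain continuous $r_n:Y_n\to X$ with $r_n(\theta(x))\to x$ uniformly, so there is an $N$ with $\sup_x d_X(r_N(\theta(x)),x)\le\epsilon$. Taking $\delta=1/N$ gives $U_\delta=Y_N$ and $r=r_N:U_\delta\to X$, the retraction demanded by Definition~\ref{def:AANR}.

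I expect the only real work to be in the forward direction, specifically the cofinality observation and the bookkeeping of the indices $N_k$ that converts the family $\{r^{(k)}\}$ into a single sequence $\{r_n\}$; the metric bookkeeping is disposed of by the uniform equivalence of compatible metrics on the compactum $X$ mentioned before the statement. The converse is essentially immediate once one notices that the neighborhoods $U_\delta$ of the original definition are themselves an admissible choice of the sets $Y_n$.
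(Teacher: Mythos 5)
Your proposal is correct and follows essentially the same route as the paper's proof: the converse via the sets $U_{1/n}$, and the forward direction by applying the AANR property with $\epsilon=1/k$ and showing the $Y_n$ eventually lie inside $U_{\delta_k}$, then interleaving the restrictions. The only cosmetic difference is that you establish the cofinality fact by the finite intersection property applied to the compact sets $Y_n\setminus U$, whereas the paper argues by extracting a convergent subsequence and deriving a contradiction; these are interchangeable.
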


\begin{proof}
Let $d$ be a compatible metric on $Y.$ Then $U_{\frac{1}{n}}$ is
a decreasing sequence of closed subsets with intersection $\theta(X)$
and so this condition easily implies $X$ is an AANR. 

Now assume $X$ is an AANR and that the $Y_{n}$ are given. For a
given $k>0$ we know there is a $\delta>0$ and continuous map 
$r:U_{\delta}\rightarrow X$ so that
\[
d(r(\theta(x)),x)\leq\frac{1}{k}
\]
for all $x$ in $X.$ Since $\theta(X)$ and $Y_{n}$ are compact,
we will see that for some $n_{k}$ we have the inclusion
$Y_{n_{k}}\subseteq U_{\frac{1}{k}}$ and so can define $r_{n_{k}}$
as the restriction of $r$ to $Y_{n_{k}}.$  If there is no such
inclusion, then we have $y_{1},y_{2},\ldots$ with $y_{n}\in Y_{n}$
and $d(y_{n},\theta(x))\geq\frac{1}{k}$ for all $x\in X$. Passing
to a subsequence we have $y=\lim_{n}y_{n}$ in $\theta(X)$ with
$d(y,\theta(x))\geq\frac{1}{k}$ for all $x\in X,$ and so
$d(y,y)>\frac{1}{k}$, a contradiction. We can arrange that the
$n_{k}$ are increasing, as define $r_{\ell}:Y_{\ell}\rightarrow X$
as the restriction of $r_{n_{k}}$ whenever $k$ is between $n_{k}$
and $n_{k+1}$. Finally, we define the initial $r_{1},\dots,r_{n_{1}-1}$
in any way we like.
\end{proof}

We get an even more useful characterization, an approximate local
extension property. 

\begin{prop}
\label{pro:AANRviaApproxExten}
A compactum $X$ is an AANR if, and
only if, for every closed subset $Y$ of a compact metrizable space
$Z,$ for every sequence $Y_{1}\supseteq Y_{2}\supseteq\cdots$ of
closed subsets of $Z$ with $\bigcap Y_{n}=Y,$ and for every continuous
function $\lambda:Y\rightarrow X,$ there is a sequence of continuous
functions $\lambda_{n}:Y_{n}\rightarrow X$ so that 
\[
\lim_{n\rightarrow\infty}\lambda_{n}(y)=\lambda(y)
\]
uniformly over $y$ in $Y.$ To summarize in a diagram: 
\begin{equation}
\xycompileto{eqn101}{\xymatrix{
 &	Z\\
 & Y_n \ar@{-->}[dl] _{ \lambda_n }  \ar@{^{(}->}[u] \\
 X & Y \ar[l] ^{\lambda} \ar@{^{(}->}[u]  
}
}\label{eqn:AANRextensionDiagram}
\end{equation}
\end{prop}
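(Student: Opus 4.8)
The plan is to deduce both implications from the approximate-splitting characterization of Proposition~\ref{pro:AANRviaApproxSplit}; indeed that proposition is essentially the special case of the present statement in which the ambient space $Z$ equals the target $Y$ of an embedding and $\lambda$ is the inverse of that embedding, so the two statements should be interderivable with little friction.

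For the implication that the extension property forces $X$ to be an AANR, I would simply specialize. Given an embedding $\theta\colon X\to Y$ into a compactum together with closed sets $Y_1\supseteq Y_2\supseteq\cdots$ shrinking to $\theta(X)$, I apply the extension property with ambient space $Z=Y$, closed subset $\theta(X)$, the same decreasing sets $Y_n$, and the continuous map $\lambda=\theta^{-1}\colon\theta(X)\to X$. The resulting maps $\lambda_n\colon Y_n\to X$ satisfy $\lambda_n(\theta(x))\to\theta^{-1}(\theta(x))=x$ uniformly, which is exactly the condition in Proposition~\ref{pro:AANRviaApproxSplit}; hence $X$ is an AANR.

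The forward direction is the substantive part. Fix a compatible metric $d$ on $X$; since $X$ is compact, uniform convergence into $X$ does not depend on this choice. Assume $X$ is an AANR and we are given $Y\subseteq Z$, the sets $Y_n\downarrow Y$, and $\lambda\colon Y\to X$. I would first fix an embedding $e\colon X\hookrightarrow Q$ of $X$ into the Hilbert cube $Q=[0,1]^{\mathbb N}$ and, applying the Tietze extension theorem coordinate by coordinate, extend $e\circ\lambda\colon Y\to Q$ to a continuous map $g\colon Z\to Q$. Put $W=e(X)\cup g(Y_1)$ and $W_n=e(X)\cup g(Y_n)$; each $W_n$ is a compactum, $W_1\supseteq W_2\supseteq\cdots$, and $e$ embeds $X$ as the closed subset $e(X)$ of $W$. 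The crucial identity is $\bigcap_n W_n=e(X)$: since $Z$ is compact and the $Y_n$ are decreasing compacta, $\bigcap_n g(Y_n)=g\bigl(\bigcap_n Y_n\bigr)=g(Y)=e(\lambda(Y))\subseteq e(X)$, and as $e(X)$ lies in every $W_n$ the whole intersection collapses to $e(X)$. Applying Proposition~\ref{pro:AANRviaApproxSplit} to the embedding $e$ and the sequence $W_n\downarrow e(X)$ yields continuous $r_n\colon W_n\to X$ with $\sup_{x\in X}d\bigl(r_n(e(x)),x\bigr)\to0$. Because $g(Y_n)\subseteq W_n$, I set $\lambda_n=r_n\circ g|_{Y_n}\colon Y_n\to X$; for $y\in Y$ we have $g(y)=e(\lambda(y))$, whence $\sup_{y\in Y}d\bigl(\lambda_n(y),\lambda(y)\bigr)\le\sup_{x\in X}d\bigl(r_n(e(x)),x\bigr)\to0$, which is the desired uniform convergence.

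I expect the main obstacle to be the bookkeeping around the auxiliary compactum $W$. The identity $\bigcap_n g(Y_n)=g(Y)$ genuinely uses compactness of $Z$, via a convergent-subsequence argument on preimages: a point in every $g(Y_n)$ has preimages $y_n\in Y_n$, a subsequence of which converges in $Y_1$ to a point lying in every $Y_m$, hence in $Y$. One must also confirm that each $\lambda_n$ is honestly defined on all of $Y_n$, that is, that $g$ maps $Y_n$ into the domain $W_n$ of $r_n$, which holds by construction. Everything else is routine once the correct ambient compactum has been produced.
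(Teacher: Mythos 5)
Your proof is correct, and your backward implication (specializing the extension property to $Z=Y$, the subset $\theta(X)$, and $\lambda=\theta^{-1}$) is exactly the paper's. The forward implication, however, takes a genuinely different route. The paper forms the pushout (adjunction space) $X\cup_{Y}Z$ along $\lambda$ and the inclusion $Y\hookrightarrow Z$, observes that this is a compact metrizable space into which $\iota_{X}$ embeds $X$, and applies Proposition~\ref{pro:AANRviaApproxSplit} to the decreasing closed sets $\iota_{X}(X)\cup\iota_{Z}(Y_{n})$; your maps come from composing with $\iota_{Z}$. You instead build the ambient compactum concretely: embed $X$ in the Hilbert cube $Q$, extend $e\circ\lambda$ over $Z$ by Tietze coordinate-by-coordinate to get $g\colon Z\rightarrow Q$, and run Proposition~\ref{pro:AANRviaApproxSplit} on the sets $W_{n}=e(X)\cup g(Y_{n})$. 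The constructions play the same role---your $g$ is a stand-in for $\iota_{Z}$---but yours avoids the (standard yet not entirely free) facts that an adjunction space of compacta is compact metrizable and that $\iota_{X}$ is a homeomorphism onto its image, at the price of invoking the Hilbert cube embedding and Tietze extension. A point in your favor: you justify $\bigcap_{n}g(Y_{n})=g(Y)$ by the compactness/subsequence argument, whereas the paper asserts the analogous identity $\bigcap_{n}\left(\iota_{X}(X)\cup\iota_{Z}(Y_{n})\right)=\iota_{X}(X)\cup\iota_{Z}(Y)$ without comment (there it is easier, since $\iota_{Z}$ is injective off $Y$, but it still merits a line). Finally, both approaches adapt equally well to the pointed refinement in Theorem~\ref{thm:PAANRviaApproxExten}: in your construction the pointed splitting theorem gives $r_{n}(e(x_{0}))=x_{0}$ exactly, hence $\lambda_{n}(y_{0})=x_{0}$, so nothing is lost by taking your route.
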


\begin{proof}
Suppose $X$ is an AANR and we are given $Y,$ $Z,$ $\lambda$ as indicated,
\[
\xycompileto{eqn102}{\xymatrix{
 & Z \\
 X & Y \ar[l] ^{\lambda} \ar@{^{(}->}[u] 
}}
\]
and that there are closed subsets to
that $\bigcap Y_{n}=Y$. Take the pushout,
\[
\xycompileto{eqn103}{\xymatrix{
 X\cup_{Y}Z  & Z \ar[l] _(0.3){ \iota_Z }  \\
 X     \ar[u] ^ { \iota_X }     & Y  \ar[l] ^{ \lambda } \ar@{^{(}->}[u]
}
}
\]
in which $X\cup_{Y}Z$ is a compact metrizable space and $\iota_{X}$
is one-to-one, continuous, and so a homeomorphism onto its image.
We can select a compatible metric on $X\cup_{Y}Z$ and then the
corresponding metric on $X$ to make $\iota_{X}$ an isometry. 

Consider the closed sets
\[
\iota_{X}(X)\cup\iota_{Z}\left(Y_{n}\right)
\]
that have intersection
\[
\iota_{X}(X)\cup\iota_{Z}\left(Y\right)
=
\iota_{X}(X)\cup\iota_{X}\left(\lambda(Y)\right)
=
\iota_{X}(X).
\]
Applying Proposition~\ref{pro:AANRviaApproxSplit} we find maps 
\[
\rho_{n}:\iota_{X}(X)\cup\iota_{Z}\left(Y_{n}\right)\rightarrow X
\]
 so that
 \[
\lim_{n\rightarrow\infty}\rho_{n}\circ\iota_{X}(x)=x
\]
uniformly over $x$ in $X$. We then let $r_{n}$ be defined on $Y_{n}$
by
\[
r_{n}(y)=\rho_{n}(\iota_{Z}(y))
\]
so that when $y$ is in $Y$ we have 
\[
\lim_{n\rightarrow\infty}r_{n}(y)
=
\lim_{n\rightarrow\infty}\rho_{n}(\iota_{Z}(y))
=
\lim_{n\rightarrow\infty}\rho_{n}\circ\iota_{X}(\lambda(y))
=
\lambda(y)
\]
and this convergence in uniform simply because the convergence
$\rho_{n}\circ\iota_{X}\rightarrow\mathrm{id}$ is uniform.

For the other implication we will use
Proposition~\ref{pro:AANRviaApproxSplit}. Suppose we are given
$\theta:X\rightarrow Y$ with $Y_{n}$ decreasing closed sets such
that $\bigcap_{n}Y_{n}=Y$. Here $\theta$ is assumed to be a
continuous embedding, but we can go further and select compatible
metrics on $X$ and $Y$ so that $\theta$ is an isometry. We apply
the assumed condition to $\theta^{-1}:\theta(X)\rightarrow X$ and
so find continuous functions $r_{n}:Y_{n}\rightarrow X$ with
\[
\lim_{n\rightarrow\infty}r_{n}(y)=\theta^{-1}(y)
\]
uniformly over $y$ in $\theta(Y)$. As $\theta$ is an isometry,
this is equivalent to 
\[
\lim_{n\rightarrow\infty}r_{n}(\theta(x))=x
\]
uniformly over $x$ in $X$. 
\end{proof}

Now we head the other way, looking for a very restrictive approximate
retraction problem that will be useful for showing a space is an AANR.
We use it when, as is so often the case, $X$ is given to us as a compact
subset in Euclidean space and so sits inside a hypercube, or in some
other absolute retract (AR).

\begin{prop}
\label{pro:ARtestForAANR}
Suppose $X$ is a closed subset of $Q$
where $Q$ is an absolute retract. Suppose $X_{1}\supseteq X_{2}\supseteq\cdots$
are closed subset with $\bigcap X_{n}=X$ and where for each $n$
the interior of $X_{n}$ contains $X$. Then $X$ is an AANR if, and
only if, there is a sequence of continuous functions
$r_{n}:X_{n}\rightarrow X$ so that 
\[
\lim_{n\rightarrow\infty}r_{n}(x)=x
\]
uniformly over $x$ in $X$. 
\end{prop}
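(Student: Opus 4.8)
The plan is to prove both implications through the equivalent formulation in Proposition~\ref{pro:AANRviaApproxSplit}. The forward implication is essentially free: if $X$ is an AANR, then taking the embedding to be the inclusion $X\hookrightarrow Q$ and the decreasing closed sets to be the given $X_n$ (whose intersection is $X$), Proposition~\ref{pro:AANRviaApproxSplit} immediately produces continuous maps $r_n:X_n\rightarrow X$ with $r_n(x)\to x$ uniformly over $x\in X$. The interior hypothesis plays no role in this direction.

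The substance is the reverse implication, which I would verify against the criterion of Proposition~\ref{pro:AANRviaApproxSplit} directly. Given an arbitrary continuous embedding $\theta:X\rightarrow Y$ into a compactum $Y$ and an arbitrary decreasing sequence $Y_1\supseteq Y_2\supseteq\cdots$ of closed subsets of $Y$ with $\bigcap_m Y_m=\theta(X)$, I must build continuous maps $Y_m\rightarrow X$ whose restrictions to $\theta(X)$ converge uniformly to $\theta^{-1}$. The idea is to transport the given retractions $r_n$ from the model situation inside $Q$ over to the ambient space $Y$. Since $Q$ is an absolute retract, it is an absolute extensor, so the continuous map $\theta^{-1}:\theta(X)\rightarrow X\subseteq Q$, defined on the closed subset $\theta(X)$ of $Y$, extends to a continuous map $F:Y\rightarrow Q$ with $F|_{\theta(X)}=\theta^{-1}$.

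The key step exploits the interior hypothesis. For a fixed $n$, the set $F^{-1}(\operatorname{int} X_n)$ is open in $Y$ and contains $\theta(X)$, because $F(\theta(X))=X\subseteq\operatorname{int} X_n$. I would then run the same compactness ``swallowing'' argument as in the proof of Proposition~\ref{pro:AANRviaApproxSplit}: since the $Y_m$ are compact, decreasing, and shrink to $\theta(X)$, some $Y_{m_n}$ must lie inside the open set $F^{-1}(\operatorname{int} X_n)\subseteq F^{-1}(X_n)$; otherwise a subsequence of witnesses $y_m\in Y_m\setminus F^{-1}(\operatorname{int} X_n)$ would converge to a point of $\theta(X)$ whose image under $F$ lies in $X\subseteq\operatorname{int} X_n$, contradicting that each $F(y_m)$ avoids the closed complement of $\operatorname{int} X_n$. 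Arranging the $m_n$ to be strictly increasing, I can then set, for each $m$ with $m_n\le m<m_{n+1}$, the map $s_m:=(r_n\circ F)|_{Y_m}$, which lands in $X$ because $F(Y_m)\subseteq F(Y_{m_n})\subseteq X_n$ and $r_n:X_n\rightarrow X$; the finitely many initial indices may be defined arbitrarily.

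Finally I would check convergence. For $x\in X$ and $m_n\le m<m_{n+1}$ we have $s_m(\theta(x))=r_n(F(\theta(x)))=r_n(\theta^{-1}(\theta(x)))=r_n(x)$, so $s_m(\theta(x))\to x$ uniformly over $x\in X$ as $m\to\infty$, since then $n\to\infty$ and $r_n(x)\to x$ uniformly by hypothesis. By Proposition~\ref{pro:AANRviaApproxSplit} this shows $X$ is an AANR. I expect the main obstacle to be this swallowing step: it is precisely where the assumption that each $\operatorname{int} X_n$ contains $X$ is indispensable, since without an open neighborhood of $\theta(X)$ sitting inside $F^{-1}(X_n)$ the compactness argument collapses and no single $Y_{m_n}$ need be captured.
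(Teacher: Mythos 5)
Your proof is correct and follows essentially the same route as the paper's: extend $\theta^{-1}$ over $Y$ to a map into $Q$ via the absolute retract (extensor) property, use compactness of the decreasing $Y_m$ to swallow them into the sets $X_n$, compose with the given $r_n$, interleave the indices, and conclude via Proposition~\ref{pro:AANRviaApproxSplit}. The only cosmetic difference is that you run the swallowing argument on the open preimages $F^{-1}(\operatorname{int} X_n)$ inside $Y$, whereas the paper first establishes $\bigcap_k \alpha(Y_k)=X$ and then swallows the compact images $\alpha(Y_k)$ into $\operatorname{int} X_n$ inside $Q$; both rest on the same compactness argument.
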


\begin{proof}
The only nontrivial implication is the backwards one.

Suppose we are given a continuous embedding $\theta:X\rightarrow Y$
into a compactum $Y$ and a sequence
$Y_{1}\supseteq Y_{2}\supseteq\cdots$ of closed subsets of $Y$
with $\bigcap Y_{n}=\theta(X)$. Consider
the diagram
\[
\xycompileto{eqn104}{\xymatrix{
Q
	& Y 
		\\
X_n	\ar@{_{(}->}[u]
	& Y_k \ar@{_{(}->}[u]
		\\
X	\ar[r] _{\theta}	\ar@{_{(}->}[u]
	&  \theta(X) \ar@{_{(}->}[u]
}
}  
\]
We apply the extension property of
AR spaces to the map $\theta^{-1}$ to get $\alpha:Y\rightarrow Q$
so that $\alpha(\theta(x))=x$ for all $x$ in $X$. We also have
the assumed $r_{n}$ which we indicate now as well,
\[
\xycompileto{eqn105}{\xymatrix{
Q
	& Y \ar[l] _{\alpha}
		\\
X_n	\ar@{_{(}->}[u] \ar@/_/[d] _{r_n}
	& Y_k \ar@{_{(}->}[u]
		\\
X	\ar[r] _{\theta}	\ar@{_{(}->}[u]
	&  \theta(X) \ar@{_{(}->}[u]
}
}  
\]
where the diagram is commutative except as it involves $r_{n}$, where
we have $r_{n}(x)\rightarrow x$ uniformly over $x$ in $X$.

Next we calculate the intersection of the $\alpha(Y_{k})$. Easily
we see
\[
\bigcap_{k}\alpha(Y_{k})\supseteq\alpha\left(\bigcap_{k}Y_{k}\right)
=
\alpha\left(\theta(X)\right)
=
X
\]
and so suppose $p$ is in $\bigcap_{k}\alpha(Y_{k})$. This means
$p=\alpha(y_{k})$ for $y_{k}$ in $Y_{k}$. The ambient space $Y$
is compact, so we can pass to a subsequence $y_{k_{\ell}}$ so that
the limit $y$ exists. Notice $y$ must be in $\theta(X)$ by the
assumptions on the $Y_{k}$ and so
\[
p
=
\lim_{\ell}p=\lim_{\ell}\alpha\left(y_{k_{\ell}}\right)
=
\alpha\left(\lim_{\ell}y_{k_{\ell}}\right)
=
\alpha(y)\in\alpha\left(\theta(X)\right)
=
X,
\]
establishing the expected equality 
\[
\bigcap_{k}\alpha(Y_{k})=X.
\]

Fix $n$. The sets $\alpha(Y_{k})$ are compact and decreasing, so
are eventually contained the interior of $X_{n}$. We can find a subsequence
$Y_{k_{1}},Y_{k_{2}},\dots$ so that 
$\alpha\left(Y_{k_{n}}\right)\subseteq X_{n}$.
We define $\rho_{k_{n}}:Y_{k_{n}}\rightarrow X$
by
\[
\rho_{k_{n}}(y)=r_{n}(\alpha(y)).
\]
We define $\rho_{1}$ though $\rho_{k_{1}-1}$ at random and for
$\ell$ strictly between $k_{n}$ and $k_{n+1}$ we define
$\rho_{\ell}$ to be the restriction of $\rho_{k_{n}}$ to
$Y_{\ell}$ to ensure $\rho_{\ell}\left(\theta(x)\right)$ is
the same as $\rho_{k_{n}}\left(\theta(x)\right)$ except with
each term possibly repeated. This will have no effect on the
uniformity of the convergence. For any $x$ in $X$ we find
\begin{align*}
\lim_{\ell\rightarrow\infty}\rho_{\ell}\left(\theta(x)\right)
& =\lim_{n\rightarrow\infty}\rho_{k_{n}}\left(\theta(x)\right)\\
& =\lim_{n\rightarrow\infty}r_{n}(\alpha(\theta(x)))\\
& =\alpha\left(\theta(x)\right)\\
& =x
\end{align*}
and the convergence in uniform because $r_{n}\rightarrow\mathrm{id}$
uniformly and $\theta$ must be uniformly continuous. 
\end{proof}

\begin{figure}
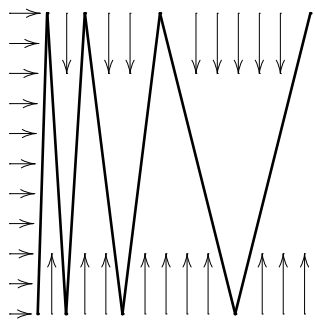

\[
\xycompileto{eqn109}{\xygraph{
!~-{@{-}@[|(2.5)]}
!{<0cm,0cm>;<4.0cm,0.0cm>:<0.0cm,4.0cm>::}  
!{(0,1)}*{}="01"
!{(0,0)}*{}="00"
!{(1,1)}*{}="a1"
!{(0.75,0)}*{}="b1"
!{(0.5,1)}*{}="a2"
!{(0.375,0)}*{}="b2"
!{(0.25,1)}*{}="a3"
!{(0.1875,0)}*{}="b3"
!{(0.125,1)}*{}="a4"
!{(0.09375,0)}*{}="b4"
!{(0.0625,1)}*{}="a5"
!{(0.046875,0)}*{}="b5"
!{(0.03125,1)}*{}="a6"
!{(0.0234375,0)}*{}="b6"
!{(0.015625,1)}*{}="a7"
!{(0.01171875,0)}*{}="b7"
!{(0.0078125,1)}*{}="a8"
!{(0.005859375,0)}*{}="b8"
!{(0.00390625,1)}*{}="a9"
!{(0.0029296875,0)}*{}="b9"
!{(0.001953125,1)}*{}="a10"
!{(0.0014648438,0)}*{}="b10"
!{(0.00,0.0)}*{}="L0"
!{(0.00,0.1)}*{}="L1"
!{(0.00,0.2)}*{}="L2"
!{(0.00,0.3)}*{}="L3"
!{(0.00,0.4)}*{}="L4"
!{(0.00,0.5)}*{}="L5"
!{(0.00,0.6)}*{}="L6"
!{(0.00,0.7)}*{}="L7"
!{(0.00,0.8)}*{}="L8"
!{(0.00,0.9)}*{}="L9"
!{(0.00,1.0)}*{}="LX"
!{(0.07,0.0)}*{}="M0"
!{(0.0727,0.1)}*{}="M1"
!{(0.0754,0.2)}*{}="M2"
!{(0.0781,0.3)}*{}="M3"
!{(0.0808,0.4)}*{}="M4"
!{(0.0835,0.5)}*{}="M5"
!{(0.0862,0.6)}*{}="M6"
!{(0.0889,0.7)}*{}="M7"
!{(0.0916,0.8)}*{}="M8"
!{(0.0943,0.9)}*{}="M9"
!{(0.10,1.0)}*{}="MX"
!{(0.14,0.0)}*{}="U0"
!{(0.14,0.2)}*{}="N0"
!{(0.25,0.0)}*{}="U1"
!{(0.25,0.2)}*{}="N1"
!{(0.32,0.0)}*{}="U2"
!{(0.32,0.2)}*{}="N2"
!{(0.45,0.0)}*{}="U3"
!{(0.45,0.2)}*{}="N3"
!{(0.52,0.0)}*{}="U4"
!{(0.52,0.2)}*{}="N4"
!{(0.59,0.0)}*{}="U5"
!{(0.59,0.2)}*{}="N5"
!{(0.66,0.0)}*{}="U6"
!{(0.66,0.2)}*{}="N6"
!{(0.84,0.0)}*{}="U7"
!{(0.84,0.2)}*{}="N7"
!{(0.91,0.0)}*{}="U8"
!{(0.91,0.2)}*{}="N8"
!{(0.98,0.0)}*{}="U9"
!{(0.98,0.2)}*{}="N9"
!{(0.19,1.0)}*{}="B0"
!{(0.19,0.8)}*{}="K0"
!{(0.33,1.0)}*{}="B1"
!{(0.33,0.8)}*{}="K1"
!{(0.40,1.0)}*{}="B2"
!{(0.40,0.8)}*{}="K2"
!{(0.62,1.0)}*{}="B3"
!{(0.62,0.8)}*{}="K3"
!{(0.69,1.0)}*{}="B4"
!{(0.69,0.8)}*{}="K4"
!{(0.76,1.0)}*{}="B5"
!{(0.76,0.8)}*{}="K5"
!{(0.83,1.0)}*{}="B6"
!{(0.83,0.8)}*{}="K6"
!{(0.90,1.0)}*{}="B7"
!{(0.90,0.8)}*{}="K7"
"B0":"K0"
"B1":"K1"
"B2":"K2"
"B3":"K3"
"B4":"K4"
"B5":"K5"
"B6":"K6"
"B7":"K7"
"U0":"N0"
"U1":"N1"
"U2":"N2"
"U3":"N3"
"U4":"N4"
"U5":"N5"
"U6":"N6"
"U7":"N7"
"U8":"N8"
"U9":"N9"
"L0":"M0"
"L1":"M1"
"L2":"M2"
"L3":"M3"
"L4":"M4"
"L5":"M5"
"L6":"M6"
"L7":"M7"
"L8":"M8"
"L9":"M9"
"LX":"MX"
"a1"-"b1"
"b1"-"a2"
"a2"-"b2"
"b2"-"a3"
"a3"-"b3"
"b3"-"a4"
"a4"-"b4"
}}
\]
\caption{An illustration of the approximate retraction of the unit square onto
the topologist's sine curve from Figure~\ref{fig:topologist's-sine-curve}.
\label{fig:topologist's-sine-curve-1}}
\end{figure}

\begin{example}
\label{exa:standard-example} 
A standard example of an AANR is the
topologist's sine curve $X$ as illustrated in 
Figure~\ref{fig:topologist's-sine-curve}.  This is moreover an AAR,
meaning an approximative absolute retract, as was observed by
Clapp \cite{ClappAANR}. The essential argument here is that the
square in which $X$ is embedded can be mapped to $X$ so as to fix
the points of $X$ except for those in a small region on the left
of the square. These are to be mapped a little horizontally to
a segment in $X$. The rest of is mapped vertically to $X$. This
approximate retraction is illustrated in
Figure~\ref{fig:topologist's-sine-curve-1}. 
\end{example}

\begin{example}
\label{exa:JoinNotAANR} 
Joining two copies of the topologist's sine curve at a point, as
indicated in Figure~\ref{fig:topologist's-sine-curve}, leads to
a space $X$ that is not an AANR. Consider the closed neighborhoods
$X_{n}$ of $X$ as illustrated in
Figure~\ref{fig:two-sine-curves-Nasty-why}.  Each $X_{n}$ consists
of a V-shaped bar in the center and a thin strip around the zig-zag
away from the center. These $X_{n}$ are all path connected, and
$X=\bigcap_{n}X_{n}$. Were $X$ an AANR then
Proposition~\ref{pro:AANRviaApproxSplit} would give us maps
$r_{n}:X_{n}\rightarrow X$ that move points in $X$ by no more that
a given $\epsilon$. Since $X$ has three path-components, $r_{n}(X)$
must lie entirely in one of these path-components, so in the left
zig-zag of $X$, the right zig-zag of $X$, or the middle V-shape of
$X$, all of which have diameter less than the diameter of $X$.
This contradicts the fact that the two outermost points of $X$
are moved very little by $r_{n}$.
\end{example}

\begin{figure}
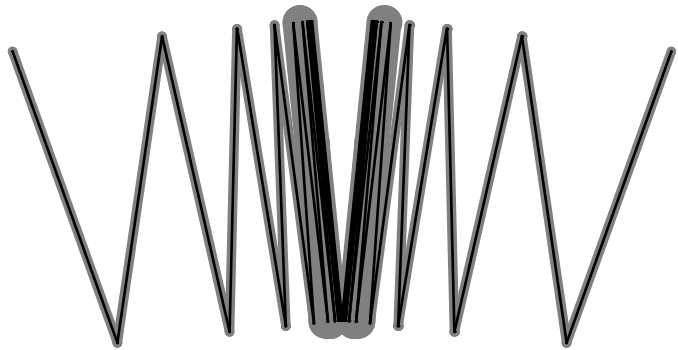

\[
\xycompileto{eqn110}{\xygraph{
!~-{@{-}@[|(2.5)]}
!~:{@{-}@[gray]}
!{<0cm,0cm>;<3.98cm,-0.4cm>:<0.4cm,3.98cm>::}  
!{(0,1)}*{}="01"
!{(0,0)}*{}="00"
!{(0.040,1)}*{}="x01"
!{(0.040,0.6)}*{}="x05"
!{(0.053,0.002)}*{}="x06"
!{(0.040,0)}*{}="x00"
!{(1,1)}*{}="a1"
!{(0.75,0)}*{}="b1"
!{(0.5,1)}*{}="a2"
!{(0.375,0)}*{}="b2"
!{(0.25,1)}*{}="a3"
!{(0.1875,0)}*{}="b3"
!{(0.125,1)}*{}="a4"
!{(0.09375,0)}*{}="b4"
!{(0.0625,1)}*{}="a5"
!{(0.046875,0)}*{}="b5"
!{(0.03125,1)}*{}="a6"
!{(0.0234375,0)}*{}="b6"
!{(0.015625,1)}*{}="a7"
!{(0.01171875,0)}*{}="b7"
!{(0.0078125,1)}*{}="a8"
!{(0.005859375,0)}*{}="b8"
!{(0.00390625,1)}*{}="a9"
!{(0.0029296875,0)}*{}="b9"
!{(0.001953125,1)}*{}="a10"
!{(0.0014648438,0)}*{}="b10"
!{<0cm,0cm>;<-3.98cm,-0.4cm>:<-0.4cm,3.98cm>::}   
!{(0,1)}*{}="z01"
!{(0,0)}*{}="z00"
!{(0.040,1)}*{}="y01"
!{(0.040,0.6)}*{}="y05"
!{(0.053,0.002)}*{}="y06"
!{(0.040,0)}*{}="y00"
!{(1,1)}*{}="za1"
!{(0.75,0)}*{}="zb1"
!{(0.5,1)}*{}="za2"
!{(0.375,0)}*{}="zb2"
!{(0.25,1)}*{}="za3"
!{(0.1875,0)}*{}="zb3"
!{(0.125,1)}*{}="za4"
!{(0.09375,0)}*{}="zb4"
!{(0.0625,1)}*{}="za5"
!{(0.046875,0)}*{}="zb5"
!{(0.03125,1)}*{}="za6"
!{(0.0234375,0)}*{}="zb6"
!{(0.015625,1)}*{}="za7"
!{(0.01171875,0)}*{}="zb7"
!{(0.0078125,1)}*{}="za8"
!{(0.005859375,0)}*{}="zb8"
!{(0.00390625,1)}*{}="za9"
!{(0.0029296875,0)}*{}="zb9"
!{(0.001953125,1)}*{}="za10"
!{(0.0014648438,0)}*{}="zb10"
"x00":@[|(33.0)]"x01"  "y00":@[|(33.0)]"y01"
"x06":@[|(33.0)]"x05"  "y06":@[|(33.0)]"y05"
                       "a1":@[|(9.0)]"b1"  "za1":@[|(9.0)]"zb1"
"00"-"01" "z00"-"z01"  "b1":@[|(9.0)]"a2"  "zb1":@[|(9.0)]"za2"
"a1"-"b1" "za1"-"zb1"  "a2":@[|(9.0)]"b2"  "za2":@[|(9.0)]"zb2"
"b1"-"a2" "zb1"-"za2"  "b2":@[|(9.0)]"a3"  "zb2":@[|(9.0)]"za3"
"a2"-"b2" "za2"-"zb2"  "a3":@[|(9.0)]"b3"  "za3":@[|(9.0)]"zb3"
"b2"-"a3" "zb2"-"za3"  "b3":@[|(9.0)]"a4"  "zb3":@[|(9.0)]"za4"
"a3"-"b3" "za3"-"zb3"  "a4":@[|(9.0)]"b4"  "za4":@[|(9.0)]"zb4"
"b3"-"a4" "zb3"-"za4" 
"a4"-"b4" "za4"-"zb4" 
"b4"-"a5" "zb4"-"za5" 
"a5"-"b5" "za5"-"zb5"
"b5"-"a6" "zb5"-"za6"
"a6"-"b6" "za6"-"zb6"
"b6"-"a7" "zb6"-"za7"
"a7"-"b7" "za7"-"zb7"
"b7"-"a8" "zb7"-"za8"
"a8"-"b8" "za8"-"zb8"
"b8"-"a9" "zb8"-"za9"
"a9"-"b9" "za9"-"zb9"
"b9"-"a10" "zb9"-"za10"
"a10"-"b10" "za10"-"zb10"
}}
\]
\caption{In black the space $X$ as in Figure~\ref{fig:topologist's-sine-curve},
and in gray an example of the neighborhoods $X_{n}$ used
in Example~\ref{exa:JoinNotAANR}.
\label{fig:two-sine-curves-Nasty-why}}
\end{figure}

\begin{example}
\label{exa:Join-is-AANR} 
Joining two copies of the topologist's sine
curve at different point, as indicated in
Figure~\ref{fig:two-sine-curves-Nice}, leads to a space $X$ that
is an AANR, and indeed an AAR. As in Example~\ref{exa:standard-example}
we can approximately retract a rectangle in the surrounding Euclidean
space to $X$. This is illustrated in Figure~\ref{fig:why-join_is-nice}.
\end{example}

\begin{figure}
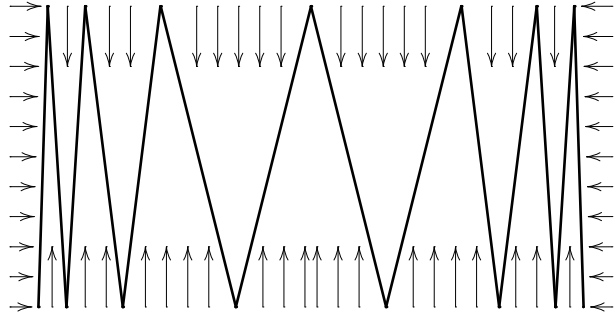

\[
\xycompileto{eqn111}{\xygraph{
!~-{@{-}@[|(2.5)]}
!{<0cm,0cm>;<4.0cm,0.0cm>:<0.0cm,4.0cm>::}  
!{(0,1)}*{}="01"
!{(0,0)}*{}="00"
!{(1,1)}*{}="a1"
!{(0.75,0)}*{}="b1"
!{(0.5,1)}*{}="a2"
!{(0.375,0)}*{}="b2"
!{(0.25,1)}*{}="a3"
!{(0.1875,0)}*{}="b3"
!{(0.125,1)}*{}="a4"
!{(0.09375,0)}*{}="b4"
!{(0.0625,1)}*{}="a5"
!{(0.046875,0)}*{}="b5"
!{(0.03125,1)}*{}="a6"
!{(0.0234375,0)}*{}="b6"
!{(0.015625,1)}*{}="a7"
!{(0.01171875,0)}*{}="b7"
!{(0.0078125,1)}*{}="a8"
!{(0.005859375,0)}*{}="b8"
!{(0.00390625,1)}*{}="a9"
!{(0.0029296875,0)}*{}="b9"
!{(0.001953125,1)}*{}="a10"
!{(0.0014648438,0)}*{}="b10"
!{(0.00,0.0)}*{}="L0"
!{(0.00,0.1)}*{}="L1"
!{(0.00,0.2)}*{}="L2"
!{(0.00,0.3)}*{}="L3"
!{(0.00,0.4)}*{}="L4"
!{(0.00,0.5)}*{}="L5"
!{(0.00,0.6)}*{}="L6"
!{(0.00,0.7)}*{}="L7"
!{(0.00,0.8)}*{}="L8"
!{(0.00,0.9)}*{}="L9"
!{(0.00,1.0)}*{}="LX"
!{(0.07,0.0)}*{}="M0"
!{(0.0727,0.1)}*{}="M1"
!{(0.0754,0.2)}*{}="M2"
!{(0.0781,0.3)}*{}="M3"
!{(0.0808,0.4)}*{}="M4"
!{(0.0835,0.5)}*{}="M5"
!{(0.0862,0.6)}*{}="M6"
!{(0.0889,0.7)}*{}="M7"
!{(0.0916,0.8)}*{}="M8"
!{(0.0943,0.9)}*{}="M9"
!{(0.10,1.0)}*{}="MX"
!{(0.14,0.0)}*{}="U0"
!{(0.14,0.2)}*{}="N0"
!{(0.25,0.0)}*{}="U1"
!{(0.25,0.2)}*{}="N1"
!{(0.32,0.0)}*{}="U2"
!{(0.32,0.2)}*{}="N2"
!{(0.45,0.0)}*{}="U3"
!{(0.45,0.2)}*{}="N3"
!{(0.52,0.0)}*{}="U4"
!{(0.52,0.2)}*{}="N4"
!{(0.59,0.0)}*{}="U5"
!{(0.59,0.2)}*{}="N5"
!{(0.66,0.0)}*{}="U6"
!{(0.66,0.2)}*{}="N6"
!{(0.84,0.0)}*{}="U7"
!{(0.84,0.2)}*{}="N7"
!{(0.91,0.0)}*{}="U8"
!{(0.91,0.2)}*{}="N8"
!{(0.98,0.0)}*{}="U9"
!{(0.98,0.2)}*{}="N9"
!{(0.19,1.0)}*{}="B0"
!{(0.19,0.8)}*{}="K0"
!{(0.33,1.0)}*{}="B1"
!{(0.33,0.8)}*{}="K1"
!{(0.40,1.0)}*{}="B2"
!{(0.40,0.8)}*{}="K2"
!{(0.62,1.0)}*{}="B3"
!{(0.62,0.8)}*{}="K3"
!{(0.69,1.0)}*{}="B4"
!{(0.69,0.8)}*{}="K4"
!{(0.76,1.0)}*{}="B5"
!{(0.76,0.8)}*{}="K5"
!{(0.83,1.0)}*{}="B6"
!{(0.83,0.8)}*{}="K6"
!{(0.90,1.0)}*{}="B7"
!{(0.90,0.8)}*{}="K7"
!{<8.00cm,0cm>+<0.0cm,0cm>;<8.00cm,0cm>+<-4.00cm,-0cm>:<8.00cm,0cm>+<-0cm,4.00cm>::}   
!{(0,1)}*{}="z01"
!{(0,0)}*{}="z00"
!{(1,1)}*{}="za1"
!{(0.75,0)}*{}="zb1"
!{(0.5,1)}*{}="za2"
!{(0.375,0)}*{}="zb2"
!{(0.25,1)}*{}="za3"
!{(0.1875,0)}*{}="zb3"
!{(0.125,1)}*{}="za4"
!{(0.09375,0)}*{}="zb4"
!{(0.0625,1)}*{}="za5"
!{(0.046875,0)}*{}="zb5"
!{(0.03125,1)}*{}="za6"
!{(0.0234375,0)}*{}="zb6"
!{(0.015625,1)}*{}="za7"
!{(0.01171875,0)}*{}="zb7"
!{(0.0078125,1)}*{}="za8"
!{(0.005859375,0)}*{}="zb8"
!{(0.00390625,1)}*{}="za9"
!{(0.0029296875,0)}*{}="zb9"
!{(0.001953125,1)}*{}="za10"
!{(0.0014648438,0)}*{}="zb10"
!{(0.00,0.0)}*{}="zL0"
!{(0.00,0.1)}*{}="zL1"
!{(0.00,0.2)}*{}="zL2"
!{(0.00,0.3)}*{}="zL3"
!{(0.00,0.4)}*{}="zL4"
!{(0.00,0.5)}*{}="zL5"
!{(0.00,0.6)}*{}="zL6"
!{(0.00,0.7)}*{}="zL7"
!{(0.00,0.8)}*{}="zL8"
!{(0.00,0.9)}*{}="zL9"
!{(0.00,1.0)}*{}="zLX"
!{(0.07,0.0)}*{}="zM0"
!{(0.0727,0.1)}*{}="zM1"
!{(0.0754,0.2)}*{}="zM2"
!{(0.0781,0.3)}*{}="zM3"
!{(0.0808,0.4)}*{}="zM4"
!{(0.0835,0.5)}*{}="zM5"
!{(0.0862,0.6)}*{}="zM6"
!{(0.0889,0.7)}*{}="zM7"
!{(0.0916,0.8)}*{}="zM8"
!{(0.0943,0.9)}*{}="zM9"
!{(0.10,1.0)}*{}="zMX"
!{(0.14,0.0)}*{}="zU0"
!{(0.14,0.2)}*{}="zN0"
!{(0.25,0.0)}*{}="zU1"
!{(0.25,0.2)}*{}="zN1"
!{(0.32,0.0)}*{}="zU2"
!{(0.32,0.2)}*{}="zN2"
!{(0.45,0.0)}*{}="zU3"
!{(0.45,0.2)}*{}="zN3"
!{(0.52,0.0)}*{}="zU4"
!{(0.52,0.2)}*{}="zN4"
!{(0.59,0.0)}*{}="zU5"
!{(0.59,0.2)}*{}="zN5"
!{(0.66,0.0)}*{}="zU6"
!{(0.66,0.2)}*{}="zN6"
!{(0.84,0.0)}*{}="zU7"
!{(0.84,0.2)}*{}="zN7"
!{(0.91,0.0)}*{}="zU8"
!{(0.91,0.2)}*{}="zN8"
!{(0.98,0.0)}*{}="zU9"
!{(0.98,0.2)}*{}="zN9"
!{(0.19,1.0)}*{}="zB0"
!{(0.19,0.8)}*{}="zK0"
!{(0.33,1.0)}*{}="zB1"
!{(0.33,0.8)}*{}="zK1"
!{(0.40,1.0)}*{}="zB2"
!{(0.40,0.8)}*{}="zK2"
!{(0.62,1.0)}*{}="zB3"
!{(0.62,0.8)}*{}="zK3"
!{(0.69,1.0)}*{}="zB4"
!{(0.69,0.8)}*{}="zK4"
!{(0.76,1.0)}*{}="zB5"
!{(0.76,0.8)}*{}="zK5"
!{(0.83,1.0)}*{}="zB6"
!{(0.83,0.8)}*{}="zK6"
!{(0.90,1.0)}*{}="zB7"
!{(0.90,0.8)}*{}="zK7"
"B0":"K0" "zB0":"zK0" 
"B1":"K1" "zB1":"zK1" 
"B2":"K2" "zB2":"zK2" 
"B3":"K3" "zB3":"zK3" 
"B4":"K4" "zB4":"zK4" 
"B5":"K5" "zB5":"zK5" 
"B6":"K6" "zB6":"zK6" 
"B7":"K7" "zB7":"zK7" 
"U0":"N0" "zU0":"zN0" 
"U1":"N1" "zU1":"zN1" 
"U2":"N2" "zU2":"zN2" 
"U3":"N3" "zU3":"zN3"
"U4":"N4" "zU4":"zN4"
"U5":"N5" "zU5":"zN5"
"U6":"N6" "zU6":"zN6" 
"U7":"N7" "zU7":"zN7" 
"U8":"N8" "zU8":"zN8" 
"U9":"N9" "zU9":"zN9" 
"L0":"M0" "zL0":"zM0" 
"L1":"M1" "zL1":"zM1" 
"L2":"M2" "zL2":"zM2" 
"L3":"M3" "zL3":"zM3" 
"L4":"M4" "zL4":"zM4" 
"L5":"M5" "zL5":"zM5" 
"L6":"M6" "zL6":"zM6" 
"L7":"M7" "zL7":"zM7" 
"L8":"M8" "zL8":"zM8" 
"L9":"M9" "zL9":"zM9" 
"LX":"MX" "zLX":"zMX" 
"a1"-"b1" "za1"-"zb1" 
"b1"-"a2" "zb1"-"za2" 
"a2"-"b2" "za2"-"zb2" 
"b2"-"a3" "zb2"-"za3" 
"a3"-"b3" "za3"-"zb3" 
"b3"-"a4" "zb3"-"za4" 
"a4"-"b4" "za4"-"zb4" 
}}
\]
\caption{In illustration of the approximate retraction of a rectangle onto
the space in Figure~\ref{fig:two-sine-curves-Nice}. 
\label{fig:why-join_is-nice}}
\end{figure}

\section{PAANR spaces}
\label{sec:PAANR-spaces}

We wish to rework Section~\ref{sec:AANR-spaces} for pointed compacta.
In contrast to the situation regarding ANR spaces, the PAANR property
will depend on the choice of point. We already encountered this dependence
when studying, in \cite{LoringWeaklyProjective}, pointed approximative
absolute retracts (PANR).

\begin{defn}
\label{def:PAANR}A pointed compactum $\left(X,x_{0}\right)$ is a
\emph{pointed approximative absolute neighborhood retract (PAANR)}
if, for every homeomorphic embedding $\theta:X\rightarrow Y$ of $X$
into a compact metric space $(Y,d)$, and for every $\epsilon>0$,
there exists $\delta>0$ and a continuous function $r:U_{\delta}\rightarrow X$
so that 
\[
r(\theta(x_{0}))=x_{0}
\]
and
\[
d(r\circ\theta(x),x)\leq\epsilon
\]
for all $x$ in $X,$ where $U_{\delta}$ is as in (\ref{eq:def-of-U_delta}). 
\end{defn}

We could just as well have asked that $\left(Y,y_{0}\right)$ be a
pointed compactum with compatible metric $d$ and that $\theta$ and
$r$ be pointed maps. As before we wish to replace the $U_{\delta}$
with more general closed sets that decrease to $\theta(X)$. The sets
need not be neighborhoods of $\theta(X)$, although later we will
require this when we devise a method for proving that a closed subset,
with chosen point, of an AR is a PAANR.

\begin{thm}
\label{thm:PAANRviaApproxSplit} 
A pointed compactum $\left(X,x_{0}\right)$ is a PAANR if, and only
if, for every continuous embedding $\theta:X\rightarrow Y$ of $X$ into
a compactum $Y$, and for every sequence $Y_{1}\supseteq Y_{2}\supseteq\cdots$
of closed subsets of $Y$ with $\bigcap Y_{n}=\theta(X),$ there exists a
sequence of continuous functions $r_{n}:Y_{n}\rightarrow X$ so that
\[
r_{n}(\theta(x_{0}))=x_{0}
\]
and
\[
\lim_{n\rightarrow\infty}r_{n}(\theta(x))=x
\]
uniformly over $x$ in $X.$ 
\end{thm}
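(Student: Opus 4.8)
The plan is to mirror the proof of Proposition~\ref{pro:AANRviaApproxSplit} exactly, carrying the extra requirement $r_n(\theta(x_0))=x_0$ along at every step. This requirement turns out to be essentially free, since it is preserved under restriction and is trivially satisfied by the constant map $y\mapsto x_0$.

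For the forward implication I would suppose the sequence condition holds, fix an embedding $\theta:X\rightarrow Y$ into a compact metric space $(Y,d)$ and an $\epsilon>0$. Applying the hypothesis to the decreasing closed sets $Y_n=U_{1/n}$, whose intersection is $\theta(X)$ because $\theta(X)$ is closed, yields maps $r_n:U_{1/n}\rightarrow X$ with $r_n(\theta(x_0))=x_0$ and $r_n(\theta(x))\to x$ uniformly. Choosing $n$ large enough that $d(r_n(\theta(x)),x)\le\epsilon$ for all $x$, I would set $\delta=1/n$ and $r=r_n$; this $r$ witnesses the PAANR property, pointed condition included.

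For the backward implication I would suppose $(X,x_0)$ is a PAANR and that decreasing closed sets $Y_n$ with $\bigcap Y_n=\theta(X)$ are given, and fix a compatible metric $d$ on $Y$. For each $k$ the PAANR property supplies a $\delta>0$ and a continuous $r:U_\delta\rightarrow X$ with $r(\theta(x_0))=x_0$ and $d(r(\theta(x)),x)\le 1/k$ for all $x$. The same compactness argument as in Proposition~\ref{pro:AANRviaApproxSplit} produces an index $n_k$ with $Y_{n_k}\subseteq U_\delta$, and I would define $r_{n_k}$ as the restriction of $r$ to $Y_{n_k}$. Since $\theta(x_0)\in\theta(X)\subseteq Y_{n_k}$, this restriction inherits $r_{n_k}(\theta(x_0))=x_0$ automatically. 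I would arrange the $n_k$ to be increasing, define $r_\ell$ by further restriction for intermediate indices, and for the initial indices $\ell<n_1$ simply take the constant map $r_\ell(y)=x_0$, which certainly satisfies the pointed condition. These finitely many initial terms do not affect the uniform convergence $r_n(\theta(x))\to x$.

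There is no serious obstacle here: the whole content lies in the unpointed Proposition~\ref{pro:AANRviaApproxSplit}, and the pointed requirement rides along for free because $\theta(x_0)$ lies in $\theta(X)$, which is contained in every $Y_n$, so the condition $r(\theta(x_0))=x_0$ survives every restriction. The one point demanding a moment of care is that \emph{every} $r_n$---including the intermediate and initial ones---must satisfy the pointed condition, which is precisely why the constant map is a convenient choice for the initial terms.
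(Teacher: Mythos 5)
Your proposal is correct and follows essentially the same route as the paper: the trivial direction uses the sets $U_{1/n}$, and the substantive direction reruns the compactness argument of Proposition~\ref{pro:AANRviaApproxSplit}, noting that the pointed condition $r(\theta(x_{0}))=x_{0}$ survives restriction since $\theta(x_{0})\in\theta(X)\subseteq Y_{n}$. Your use of the constant map $y\mapsto x_{0}$ for the initial indices is a sensible small refinement of the paper's ``define them any way we like,'' which in the pointed setting indeed must respect the base point.
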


\begin{proof}
The reverse implication is once again trivial.

Assume $X$ is an PAANR and that the $Y_{n}$ are given. Let $d$
be a compatible metric on $Y.$ For a given $k$ we know there is
a $\delta>0$ and continuous map $r:U_{\delta}\rightarrow X$
so that $r(\theta(x_{0}))=x_{0}$ and 
\[
d(r(\theta(x)),x)\leq\frac{1}{k}
\]
for all $x$ in $X.$ The same argument used for
Proposition~\ref{pro:AANRviaApproxSplit} shows we have
$Y_{n_{k}}\subseteq U_{\frac{1}{k}}$ for some $n$
and we can again use the restrictions of $r$ to various $Y_{n_{k}}$.
\end{proof}

\begin{thm}
\label{thm:PAANRviaApproxExten} 
A pointed compactum $\left(X,x_{0}\right)$  is a PAANR if, and
only if, for every closed subset $Y$ of a compact metrizable space
$Z$, for every $y_{0}\in Y$, for every sequence
$Y_{1}\supseteq Y_{2}\supseteq\cdots$ of closed subsets of $Z$ with
$\bigcap Y_{n}=Y,$ and for every continuous function
$\lambda:Y\rightarrow X$ for which $\lambda(y_{0})=x_{0}$,
there is a sequence of continuous
functions $\lambda_{n}:Y_{n}\rightarrow X$ so that
\[
\lambda_{n}(y_{0})=\lambda(y_{0})
\]
and 
\[
\lim_{n\rightarrow\infty}\lambda_{n}(y)=\lambda(y)
\]
uniformly for $y$ in $Y.$ To summarize in a diagram: 
\begin{equation}
\xycompileto{eqn118}{\xymatrix{
 &	(Z, y_0)\\
 & (Y_n, y_0)\ar@{-->}[dl] _(0.4){ \lambda_n }  \ar@{^{(}->}[u] \\
 (X, x_0) & (Y,y_0) \ar[l] ^(0.4){\lambda} \ar@{^{(}->}[u]  
}
}
\label{eqn:PAANRextensionDiagram}
\end{equation}
\end{thm}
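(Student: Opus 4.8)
The plan is to mirror the proof of Proposition~\ref{pro:AANRviaApproxExten} almost verbatim, threading the basepoints through each step; the pointed Split characterization Theorem~\ref{thm:PAANRviaApproxSplit} does all the real work, and no new idea is needed beyond bookkeeping of where $x_0$ and $y_0$ are sent.

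For the forward implication I would begin from $Y\subseteq Z$, the point $y_0\in Y$, and $\lambda\colon Y\to X$ with $\lambda(y_0)=x_0$, and form the same pushout $X\cup_Y Z$ of the span $X\xleftarrow{\lambda}Y\hookrightarrow Z$, a compact metrizable space in which $\iota_X$ embeds $X$ homeomorphically. The essential observation is that, because $\lambda(y_0)=x_0$, the two structure maps already agree on the basepoint: $\iota_X(x_0)=\iota_Z(y_0)$, so the pushout carries a single distinguished point $p_0$. After choosing a compatible metric making $\iota_X$ an isometry, I apply the pointed Split theorem to the embedding $\iota_X$ (with ambient basepoint $p_0$) and to the decreasing closed sets $\iota_X(X)\cup\iota_Z(Y_n)$, whose intersection is $\iota_X(X)$; this yields maps $\rho_n$ with $\rho_n(p_0)=x_0$ and $\rho_n\circ\iota_X\to\mathrm{id}$ uniformly. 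Setting $\lambda_n(y)=\rho_n(\iota_Z(y))$ then delivers $\lambda_n(y_0)=\rho_n(p_0)=x_0=\lambda(y_0)$ for free, and for $y\in Y$ the identity $\iota_Z(y)=\iota_X(\lambda(y))$ reduces the uniform convergence $\lambda_n(y)\to\lambda(y)$ to $\rho_n\circ\iota_X\to\mathrm{id}$ exactly as in the unpointed argument.

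For the converse I would invoke the pointed Split theorem once more. Given an embedding $\theta\colon X\to Y$ with decreasing closed $Y_n$ satisfying $\bigcap_n Y_n=\theta(X)$, I select metrics making $\theta$ an isometry and apply the assumed extension property to the closed subset $\theta(X)\subseteq Y$, with distinguished point $y_0:=\theta(x_0)$ and with $\lambda:=\theta^{-1}\colon\theta(X)\to X$. The basepoint hypothesis $\lambda(y_0)=x_0$ holds because $\theta^{-1}(\theta(x_0))=x_0$, so I obtain $r_n\colon Y_n\to X$ with $r_n(\theta(x_0))=x_0$ and $r_n\to\theta^{-1}$ uniformly on $\theta(X)$; since $\theta$ is an isometry this is precisely $r_n(\theta(x))\to x$ uniformly over $x\in X$, which is the pointed Split condition.

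I expect no serious obstacle: the argument is a basepoint refinement of Proposition~\ref{pro:AANRviaApproxExten}. The only point demanding care is the pushout in the forward direction, where one must verify that $\iota_X(x_0)$ and $\iota_Z(y_0)$ genuinely coincide—this is exactly the content of $\lambda(y_0)=x_0$—so that a single basepoint $p_0$ feeds the pointed Split theorem and the equality $\lambda_n(y_0)=x_0$ emerges automatically rather than requiring any separate enforcement.
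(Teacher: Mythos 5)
Your proposal is correct and follows essentially the same route as the paper: both directions reduce to the pointed Split characterization (Theorem~\ref{thm:PAANRviaApproxSplit}) by reusing the pushout construction from Proposition~\ref{pro:AANRviaApproxExten}, with the key observation that $\lambda(y_0)=x_0$ forces $\iota_X(x_0)=\iota_Z(y_0)$ so the basepoint condition propagates through $\lambda_n(y)=\rho_n(\iota_Z(y))$, and the converse applies the extension property to $\theta^{-1}$ with basepoint $\theta(x_0)$ exactly as the paper does.
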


\begin{proof}
We need only modify in a few places the proof of
Proposition~\ref{pro:AANRviaApproxExten}.

In the proof of the reverse implication, the additional assumption
$\lambda(y_{0})=x_{0}$ means that $y_{0}$ and $x_{0}$ get identified
in the push-out, or more precisely $\iota_{X}(x_{0})=\iota_{Z}(y_{0}).$
Instead of invoking Proposition~\ref{pro:AANRviaApproxSplit} we
invoke Theorem~\ref{thm:PAANRviaApproxSplit}, which gives us 
\[
\rho_{n}:\iota_{X}(X)\cup\iota_{Z}\left(Y_{n}\right)\rightarrow X
\]
so that
\[
\lim_{n\rightarrow\infty}\rho_{n}\circ\iota_{X}(x)=x
\]
uniformly over $x$ in $X$ and 
\[
\rho_{n}\circ\iota_{X}(x_{0})=x_{0}.
\]
As before, $r_{n}:Y_{n}\rightarrow X$ is defined by
\[
r_{n}(y)=\rho_{n}(\iota_{Z}(y))
\]
 and we get the same uniform convergence $r_{n}(y)\rightarrow\lambda(y)$,
but additionally we find
\[
r_{n}(y_{0})
=\rho_{n}(\iota_{Z}(y_{0}))
=\rho_{n}(\iota_{X}(x_{0}))
=x_{0}.
\]

Going in the other direction, we started with $\theta:X\rightarrow Y$
an embedding, and now find continuous functions $r_{n}:Y_{n}\rightarrow X$
with 
\[
\lim_{n\rightarrow\infty}r_{n}(y)=\theta^{-1}(y)
\]
and
\[
r_{n}(\theta(x_{0}))=\theta^{-1}(\theta(x_{0}))
\]
and so get the needed additional conclusion $r_{n}(\theta(x_{0}))=x_{0}.$
\end{proof}

\begin{thm}
\label{thm:ARtestForPAANR} 
Suppose $X$ is a closed subset of $Q$ where $Q$ is an absolute retract,
and that $x_{0}$ is a point in $X$. Suppose
$X_{1}\supseteq X_{2}\supseteq\cdots$ are closed subset with
$\bigcap X_{n}=X$ and where for each $n$ the interior of $X_{n}$
contains $X$. Then $\left(X,x_{0}\right)$ is an AANR if and only if
there is a sequence of continuous functions $r_{n}:X_{n}\rightarrow X$
so that
\[
r_{n}(x_{0})=x_{0}
\]
and
\[
\lim_{n\rightarrow\infty}r_{n}(x)=x
\]
uniformly over $x$ in $X$. 
\end{thm}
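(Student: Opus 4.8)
The plan is to follow the proof of Proposition~\ref{pro:ARtestForAANR} almost verbatim, inserting the base-point conditions and invoking the pointed split criterion Theorem~\ref{thm:PAANRviaApproxSplit} in place of Proposition~\ref{pro:AANRviaApproxSplit}. As there, the forward implication is immediate: taking $\theta$ to be the inclusion $X\hookrightarrow Q$ and $Y_n=X_n$ (which are closed in the compactum $Q$ with $\bigcap X_n=X$), Theorem~\ref{thm:PAANRviaApproxSplit} directly furnishes maps $r_n\colon X_n\to X$ with $r_n(x_0)=x_0$ and $r_n(x)\to x$ uniformly, since here $\theta(x_0)=x_0$. So the only substantive direction is the backward one.

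For the backward implication I would reproduce the retraction-through-$Q$ argument. Given a continuous embedding $\theta\colon X\to Y$ into a compactum $Y$ and decreasing closed sets $Y_k$ with $\bigcap Y_k=\theta(X)$, I would use the extension property of the AR $Q$ to extend $\theta^{-1}\colon\theta(X)\to X\subseteq Q$ to a continuous $\alpha\colon Y\to Q$ with $\alpha\circ\theta=\mathrm{id}_X$. The key observation is that this identity automatically gives $\alpha(\theta(x_0))=x_0$, so the base point is carried correctly through $\alpha$ at no extra cost. I would then repeat verbatim the computation establishing $\bigcap_k\alpha(Y_k)=X$, using compactness of $Y$ and the fact that any subsequential limit of points $y_k\in Y_k$ lies in $\theta(X)$.

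With that intersection identity in hand, fix $n$; since the compact sets $\alpha(Y_k)$ decrease to $X\subseteq\mathrm{int}(X_n)$, they are eventually contained in $X_n$, and I select a subsequence $Y_{k_n}$ with $\alpha(Y_{k_n})\subseteq X_n$. Define $\rho_{k_n}\colon Y_{k_n}\to X$ by $\rho_{k_n}(y)=r_n(\alpha(y))$, interpolating on the intermediate indices exactly as in Proposition~\ref{pro:ARtestForAANR}. The uniform convergence $\rho_\ell(\theta(x))\to x$ follows as before from $r_n\to\mathrm{id}$ uniformly together with $\alpha(\theta(x))=x$ and the uniform continuity of $\theta$. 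The new base-point condition is verified by a one-line computation, $\rho_{k_n}(\theta(x_0))=r_n(\alpha(\theta(x_0)))=r_n(x_0)=x_0$, which persists under the interpolation. Applying Theorem~\ref{thm:PAANRviaApproxSplit} then yields that $\left(X,x_0\right)$ is a PAANR.

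The main obstacle, insofar as there is one, is purely bookkeeping: confirming that the base-point constraint $r_n(x_0)=x_0$ survives composition with $\alpha$ and the index-matching. Because $\alpha$ restricts to $\theta^{-1}$ on $\theta(X)$ and therefore sends $\theta(x_0)$ to $x_0$, the pointed conditions propagate for free, and no genuinely new analytic difficulty arises beyond the unpointed case treated in Proposition~\ref{pro:ARtestForAANR}.
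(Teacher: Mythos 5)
Your proposal is correct and follows essentially the same route as the paper: the paper likewise reduces to the backward implication, reuses the retraction-through-$Q$ argument of Proposition~\ref{pro:ARtestForAANR} verbatim, and closes with the identical one-line base-point computation $\rho_{\ell}(\theta(x_{0}))=\rho_{k_{n}}(\theta(x_{0}))=r_{n}(\alpha(\theta(x_{0})))=r_{n}(x_{0})=x_{0}$, the point being that $\alpha\circ\theta=\mathrm{id}_{X}$ carries the base point for free. Your write-up is in fact more explicit than the paper's (which only records the modification), but there is no difference in substance.
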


\begin{proof}
The proof of Proposition~\ref{pro:ARtestForAANR} can be modified
as follows, where it is again only the backwards implication that
involves any work. We are starting with the additional assumption
that $r_{n}(x_{0})=x_{0}$ and so at the end of the proof we can calculate
\[
\rho_{\ell}\left(\theta(x_{0})\right)
=\rho_{k_{n}}\left(\theta(x_{0})\right)
=r_{n}(\alpha(\theta(x_{0})))
=r_{n}(x_{0})
=x_{0}.
\]

\end{proof}
For the record, we have an obvious implication. 

\begin{prop}
If $\left(X,x_{0}\right)$ is a PAANR then $X$ is an AANR. 
\end{prop}

The reverse implication fails. The example is the same example that
showed in \cite{LoringWeaklyProjective} that a pointed compacta can
fail to be a pointed approximative absolute retract (PAAR) while the
underlying space is AAR.

\begin{figure}
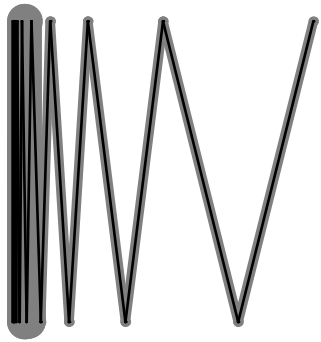

\[
\xycompileto{eqn113}{\xygraph{
!~-{@{-}@[|(2.5)]}
!~:{@{-}@[gray]}
!{<0cm,0cm>;<4.0cm,0.0cm>:<0.0cm,4.0cm>::}  
!{(0,1)}*{}="01"
!{(0,0)}*{}="00"
!{(0.040,1)}*{}="x01"
!{(0.040,0.6)}*{}="x05"
!{(0.053,0.002)}*{}="x06"
!{(0.040,0)}*{}="x00"
!{(1,1)}*{}="a1"
!{(0.75,0)}*{}="b1"
!{(0.5,1)}*{}="a2"
!{(0.375,0)}*{}="b2"
!{(0.25,1)}*{}="a3"
!{(0.1875,0)}*{}="b3"
!{(0.125,1)}*{}="a4"
!{(0.09375,0)}*{}="b4"
!{(0.0625,1)}*{}="a5"
!{(0.046875,0)}*{}="b5"
!{(0.03125,1)}*{}="a6"
!{(0.0234375,0)}*{}="b6"
!{(0.015625,1)}*{}="a7"
!{(0.01171875,0)}*{}="b7"
!{(0.0078125,1)}*{}="a8"
!{(0.005859375,0)}*{}="b8"
!{(0.00390625,1)}*{}="a9"
!{(0.0029296875,0)}*{}="b9"
!{(0.001953125,1)}*{}="a10"
!{(0.0014648438,0)}*{}="b10"
"x00":@[|(33.0)]"x01"  
"x06":@[|(33.0)]"x05"  
"a1":@[|(9.0)]"b1"  
"00"-"01"   "b1":@[|(9.0)]"a2"  
"a1"-"b1"   "a2":@[|(9.0)]"b2"  
"b1"-"a2"   "b2":@[|(9.0)]"a3"  
"a2"-"b2"   "a3":@[|(9.0)]"b3"  
"b2"-"a3"   "b3":@[|(9.0)]"a4"  
"a3"-"b3"   "a4":@[|(9.0)]"b4"  
"b3"-"a4"  
"a4"-"b4"  
"b4"-"a5"  
"a5"-"b5" 
"b5"-"a6" 
"a6"-"b6" 
"b6"-"a7" 
"a7"-"b7" 
"b7"-"a8" 
"a8"-"b8" 
"b8"-"a9"
"a9"-"b9"
"b9"-"a10"
"a10"-"b10"
}}
\]
\caption{In black the space $X$ as in Figure~\ref{fig:topologist's-sine-curve},
and in gray an example of the neighborhoods $X_{n}$ used in
Example~\ref{exa:basePointMatters-1}
\label{fig:pointed-sine-curve-Nasty}}
\end{figure}

\begin{example}
\label{exa:basePointMatters}
Consider the topologist's sine curve $X$ as illustrated in
Figure~\ref{fig:topologist's-sine-curve}(a), and the point $x_{1}$ as
in Figure~\ref{fig:topologist's-sine-curve}(b).  The approximate
retractions shown in Figure~\ref{fig:topologist's-sine-curve-1} all
fix $x_{1}$ and so $\left(X,x_{1}\right)$ is a PAANR. 
\end{example}

\begin{example}
\label{exa:basePointMatters-1}
Consider the topologist's sine curve $X$ with the point $x_{0}$ from
Figure~\ref{fig:topologist's-sine-curve}(b).  Consider the neighborhoods
of $X$ indicated in Figure~\ref{fig:pointed-sine-curve-Nasty}.  These
are path connected, and as our approximate retracts are required to
fix $x_{0}$ all of the neighborhood must be mapped into the left edge
of $X$. This is incompatible with the requirement that we approximately
fix $x_{1}$ and so $\left(X,x_{0}\right)$ is not a PAANR. 
\end{example}

An alternative to ``ignoring the special point'' is to appoint an
uninteresting point to fill the ``special role.'' Starting with compact
space $X$ we can go ahead and take the one-point compactification
$\alpha X=X\cup\{\infty\}$, which leads to a compact space which
is the old space plus a new isolated point. This leads us to the following,
which will look a lot more interesting when dualized to be about
$C^{*}$-algebras,
in Theorem~\ref{thm:WSP1_iff_tilde_A_isWSP}

\begin{prop}
Suppose $\left(X,x_{0}\right)$ is a pointed compactum. Then $X$
is an AANR if, and only if, the extension property in
Theorem~\ref{thm:PAANRviaApproxExten} holds in the special case
where $y_{0}$ is an isolated point in $Y$.
\end{prop}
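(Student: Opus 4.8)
The claim is that for a pointed compactum $(X,x_0)$, the space $X$ is an AANR if and only if the pointed extension property of Theorem~\ref{thm:PAANRviaApproxExten} holds in the restricted situation where the basepoint $y_0$ is an \emph{isolated} point of $Y$. The plan is to deduce both directions by comparing this restricted property to the unpointed extension characterization of Proposition~\ref{pro:AANRviaApproxExten}, using the isolation of $y_0$ to neutralize the pointed constraint $\lambda_n(y_0)=\lambda(y_0)$.

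For the forward direction, suppose $X$ is an AANR, and we are given $Y\subseteq Z$, an isolated point $y_0\in Y$, decreasing closed sets $Y_n$ with $\bigcap Y_n=Y$, and a map $\lambda\colon Y\to X$ with $\lambda(y_0)=x_0$. First I would apply Proposition~\ref{pro:AANRviaApproxExten} to obtain (unpointed) extensions $\mu_n\colon Y_n\to X$ with $\mu_n\to\lambda$ uniformly on $Y$. These need not fix $y_0$, so I would repair them: since $y_0$ is isolated in $Y$, one can choose (for $n$ large) a small relatively open neighborhood $W_n$ of $y_0$ in $Y_n$ whose closure meets $Y$ only in $\{y_0\}$, and then modify $\mu_n$ on $W_n$ so that it takes the value $x_0$ at $y_0$ while still converging uniformly. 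The cleanest way to arrange this is to observe that $\{y_0\}$ is relatively open and closed in $Y$, so $\lambda$ restricted to a neighborhood is locally constant near $y_0$; I would set $\lambda_n:=\mu_n$ away from $W_n$ and patch in the constant value $x_0$ on the part of $W_n$ near $y_0$, glueing continuously via the separation afforded by isolation. Because the patch occurs on sets shrinking toward $y_0$, uniform convergence is preserved. This yields the required pointed maps $\lambda_n$.

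For the reverse direction, suppose the restricted pointed property holds; I want to conclude $X$ is an AANR, i.e.\ the full (unpointed) property of Proposition~\ref{pro:AANRviaApproxExten}. Given an arbitrary unpointed instance $Y\subseteq Z$, $Y_n\downarrow Y$, $\lambda\colon Y\to X$, the idea is to manufacture an isolated basepoint. I would enlarge the ambient space by forming the disjoint union $Z':=Z\sqcup\{*\}$ (a compactum), set $Y':=Y\sqcup\{*\}$ and $Y_n':=Y_n\sqcup\{*\}$, pick any $x_0\in X$, and extend $\lambda$ to $\lambda'\colon Y'\to X$ by $\lambda'(*)=x_0$. Here $*$ is genuinely isolated in $Y'$, so the hypothesis supplies pointed extensions $\lambda_n'\colon Y_n'\to X$ fixing $*$ and converging uniformly to $\lambda'$ on $Y'$. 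Restricting each $\lambda_n'$ to $Y_n$ gives the desired unpointed extensions, and uniform convergence on $Y'$ restricts to uniform convergence on $Y$. By Proposition~\ref{pro:AANRviaApproxExten}, $X$ is an AANR.

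The main obstacle is the continuity of the patched maps $\lambda_n$ in the forward direction. Isolation of $y_0$ \emph{in $Y$} does not immediately give isolation in the larger sets $Y_n$, so $y_0$ may be a limit of points of $Y_n\setminus Y$, and a naive redefinition of $\mu_n$ only at $y_0$ would generally break continuity. The care lies in choosing the modification neighborhoods $W_n\subseteq Y_n$ correctly — using that $\{y_0\}$ is relatively clopen in $Y=\bigcap Y_n$ to find, for each large $n$, a relatively clopen (hence boundary-free) piece of $Y_n$ isolating $y_0$ from the rest of $Y$, on which the repair can be performed without introducing discontinuities — while simultaneously keeping these pieces small enough that uniform convergence survives. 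Once this topological bookkeeping is handled, the rest is routine.
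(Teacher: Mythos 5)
Your reverse direction is exactly the paper's argument: adjoin a disjoint isolated point $*$ to $Z$, $Y$, and each $Y_n$, extend $\lambda$ by an arbitrarily chosen value $x_0=\lambda'(*)$, invoke the restricted pointed property, and restrict back. No issues there.

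The forward direction, however, has a genuine gap at precisely the step you defer as ``topological bookkeeping.'' Two problems. First, your working description of the repair --- ``patch in the constant value $x_0$ on the part of $W_n$ near $y_0$, glueing continuously via the separation afforded by isolation'' --- cannot be implemented for a general compactum $X$: a map into an arbitrary $X$ admits no interpolation, so there is no way to ``glue'' the constant $x_0$ to the boundary values of $\mu_n$. As your last paragraph half-acknowledges, the only viable repair is to overwrite $\mu_n$ \emph{entirely} on a set $W_n\subseteq Y_n$ that is relatively clopen in $Y_n$, contains $y_0$, and meets $Y$ only in $\{y_0\}$. Second, you never prove such $W_n$ exist, and their existence for large $n$ is the actual content of this direction of the proposition; note that $Y_n$ itself may perfectly well be connected, so nothing of the sort is automatic. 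The missing argument (which is how the paper proceeds) is a separation-plus-compactness lemma: since $y_0$ is isolated in $Y$, the sets $\{y_0\}$ and $Y\setminus\{y_0\}$ are disjoint compact subsets of $Z$, so there are disjoint open sets $U\supseteq\{y_0\}$ and $V\supseteq Y\setminus\{y_0\}$ in $Z$; the compact sets $Y_n\setminus(U\cup V)$ decrease with intersection $Y\setminus(U\cup V)=\emptyset$, hence are empty for all $n\geq N$; for such $n$ the set $W_n:=Y_n\cap U$ is clopen in $Y_n$ (its complement is $Y_n\cap V$) and satisfies $W_n\cap Y=\{y_0\}$. Now set $\lambda_n:=x_0$ on $W_n$ and $\lambda_n:=\mu_n$ on $Y_n\cap V$, and for the finitely many $n<N$ take $\lambda_n$ constant at $x_0$ so the pointed condition still holds; uniform convergence on $Y$ survives because $W_n\cap Y=\{y_0\}$, where the extension is exact. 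With this lemma inserted your argument is complete and coincides with the paper's, up to the cosmetic difference that the paper applies the unpointed extension property only on the pieces $Y_n\cap V$ rather than obtaining $\mu_n$ on all of $Y_n$ and overwriting.
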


\begin{proof}
Assume first $X$ is an AANR. We are given $\lambda:Y\rightarrow X$
where $Y$ is compact and $y_{0}$ is isolated, and we are given
$\lambda:Y\rightarrow X$ with $\lambda(y_{0})=x_{0}$, and finally
have $Y_{1}\supseteq Y_{2}\supseteq\cdots$ compact sets with
$Y=\bigcap_{n}Y_{n}$. Since $y_{0}$ is isolated in $Y$ and $Y$
is a compact subset in the compactum $Z$, there
are disjoint sets $U$ and $V$ open in $Z$ with 
\[
\{y_{0}\}=Y\cap U
\]
and
\[
Y\setminus\{y_{0}\}=Y\cap V.
\]
The compact sets $Y_{n}\setminus\left(U\cap V\right)$ are decreasing
with intersection
\[
\bigcap_{n}Y_{n}\setminus\left(U\cap V\right)
=Y\setminus\left(U\cap V\right)
=\emptyset
\]
so for some $N$, when $n\geq N$ we have $Y_{n}=A_{n}\cup B_{n}$
where $A_{n}=Y_{n}\cap U$ and $B_{n}=Y_{n}\cap V$. Without loss
of generality, $N=1$, so we have $Y_{n}$ written as the disjoint
union of closed subsets $A_{n}$ and $B_{n}$ with both forming decreasing
chains and
\[
\bigcap A_{n}=Y_{n}\setminus\{y_{0}\}
\]
and 
\[
\bigcap B_{n}=\{y_{0}\}.
\]
We can define $\lambda_{n}:Y_{n}\rightarrow X$ by 
$\lambda_{n}(y)=\lambda(y_{0})$ for all $y$ in $B_{n}$ and use the
fact that $X$ is AAR to define $\lambda_{n}$ on $A$ so that
$\lambda_{n}(y)\rightarrow\lambda(y)$ uniformly over $x$ in
$B_{n}$. This is the desired approximate extension that is an
exact extension on $y_{0}$.

Now assume the specialized version of the approximate extension property
holds and that we are given $\lambda:Y\rightarrow X$ for $Y$ a closed
subset of a compactum $Z$. We also have decreasing closed $Y_{n}$
with intersection $Y$. We add to $Z$ an isolated point $\infty$
and consider the closed subsets $Y\cup\{\infty\}$ and
$Y_{n}\cup\{\infty\}$ of $Z\cup\{\infty\}$. We can extend $\lambda$
to a map $\bar{\lambda}$ from $Y\cup\{\infty\}$ to $X$ by
arbitrarily selecting $x_{0}$ in $X$ and setting $\lambda(\infty)=x_{0}$.
Then there are continuous functions
$\bar{\lambda}_{n}:Y_{n}\cup\{\infty\}\rightarrow X$ with
$\bar{\lambda}_{n}(\infty)=x_{0}$ 
and
$\bar{\lambda}_{n}(y)\rightarrow\bar{\lambda}(y)$
uniformly over $Y\cup\{\infty\}$. The desired functions are
the restrictions of the $\bar{\lambda}_{n}$ to the sets $Y_{n}$.
\end{proof}

\section{Two flavors of weak semiprojectivity}

The analog of being an ANR compactum for a $C^{*}$-algebra is that
it is unital and semiprojective. Indeed, Blackadar's definition of
semiprojectivity \cite{Blackadar-shape-theory} is modeled on the
non-approximative version of our Proposition~\ref{pro:AANRviaApproxExten}.
A $C^{*}$-algebra $A$ will semiprojective if we can solve the partial
lifting problem indicated here:
\begin{equation}
\xycompileto{eqn114}{\xymatrix{
 &	B \ar@{->>}[d] ^{\rho_n} \\
 & C_n \ar@{->>}[d] ^{\pi_n} \\
 A \ar[r] ^{\varphi} \ar@{-->}[ru] ^{\psi_n} & C   
}
}
\label{eqn:WSPbyLiftingDiagram}
\end{equation}

\begin{defn}
\label{def:WSP}
A separable $C^{*}$-algebra $A$ is \emph{semiprojective} (SP) if given
a $*$-homomorphism $\varphi:A\rightarrow B/J$, with $B$ a separable
$C^{*}$-algebra with ideal $J=\overline{\bigcup_{n}J_{n}}$ and
$J_{1}\triangleleft J_{2}\triangleleft\cdots$ increasing ideals in
$B$, there exist for some $n$ a $*$-homomorphism
$\psi_{n}:A\rightarrow B/J_{n}$ so that 
$\pi_{n}\circ\psi_{n}(a)=a$ for all $a$ in $A$. 
\end{defn}

We are using the $\pi_{n}$ to be the surjection defined by $\pi_{n}(b+J_{n})=b+J$.

Weak semiprojectivity can be found by weakening this partial lifting
problem in two seemingly different ways. We can either restrict the
allowed $B$ and $J_{n}$ and keep the exact lifting requirement $\pi_{n}\circ\psi_{n}(a)=a$,
or we can leave the allowed $B$ and $J_{n}$ alone and only ask that
$\pi_{n}\circ\psi_{n}(a)\rightarrow a$ for all $a$ in $A$. 

\begin{rem}
If $A$ is commutative we can define\emph{ weak semiprojectivity within
the commutative category}. We can do the same for all the variations
on semiprojectivity that follow. 
\end{rem}

\begin{defn}
A separable $C^{*}$-algebra $A$ is \emph{weakly semiprojective}
(WSP) if given a $*$-homomorphism $\varphi:A\rightarrow B/J$, with
$B$ a separable $C^{*}$-algebra with ideal $J=\overline{\bigcup_{n}J_{n}}$
and $J_{1}\triangleleft J_{2}\triangleleft\cdots$ increasing ideals
in $B$, there exist a sequence of $*$-homomorphism $\psi_{n}:A\rightarrow B/J_{n}$
so that $\pi_{n}\circ\psi_{n}(a)\rightarrow a$ for all $a$ in $A$. 
\end{defn}

It was shown in \cite{EilersLoringContingenciesStableRelations} that
this is equivalent to the original definition
\cite{Loring-lifting-perturbing} of weak semiprojectivity. In that
formulation, $B$ is always an infinite product $B=\prod B_{n}$ and
$J_{n}=B_{1}\oplus\cdots\oplus B_{n}$.  It then is possible to
interleave any sequence of approximate partial liftings into an
exact lifting to $B$. 

A $C^{*}$-algebra is semiprojective if and only if its unitization
is $\widetilde{A}$ is semiprojective, as was shown in
\cite{Blackadar-shape-theory}.  For weak semiprojectivity this fails.
We show this in Example~\ref{exa:de-unitizationCanFail},
with the aid of the following lemmas.

\begin{prop}
\label{pro:A-WSP=00003D=00003D>A-tilde-WSP}
If $A$ is separable
and WSP then $\widetilde{A}$ is WSP.
\end{prop}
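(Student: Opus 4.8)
The plan is to prove that weak semiprojectivity passes to the unitization by reducing a lifting problem for $\widetilde{A}$ to one for $A$, solving the latter by the hypothesis, and then repairing the units. Suppose we are given $\varphi:\widetilde{A}\rightarrow B/J$ with $J=\overline{\bigcup_n J_n}$ and an increasing chain of ideals $J_1\triangleleft J_2\triangleleft\cdots$. Since $\widetilde{A}$ is unital, $\varphi(1)$ is a projection $p\in B/J$, and restricting $\varphi$ to $A$ gives a $*$-homomorphism $\varphi|_A:A\rightarrow B/J$ whose image lies in the corner $p(B/J)p$. The first step is to apply the WSP hypothesis for $A$ to produce $*$-homomorphisms $\psi_n:A\rightarrow B/J_n$ with $\pi_n\circ\psi_n(a)\rightarrow\varphi(a)$ for all $a\in A$.

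The obstacle is that $\psi_n$ need not be unital in any sense — the projection $p$ must be lifted and the $\psi_n$ adjusted so that they assemble into genuine $*$-homomorphisms out of $\widetilde{A}$, not merely out of $A$. The idea is that since projections lift along quotient maps once we are close enough, and since $\pi_n\circ\psi_n(a)\rightarrow\varphi(a)$, the approximate unit behavior of the images stabilizes. More precisely, I would first lift the projection $p=\varphi(1)$: because $B/J_n\rightarrow B/J$ is a surjection of $C^*$-algebras and $p$ is a projection, for large $n$ there is a projection $q_n\in B/J_n$ with $\pi_n(q_n)=p$, and I may arrange $\psi_n(A)\subseteq q_n(B/J_n)q_n$ by compressing, i.e. replacing $\psi_n$ by $a\mapsto q_n\psi_n(a)q_n$, which remains an approximate lifting because the error introduced by the corners goes to zero as $\pi_n\circ\psi_n(a)$ approaches the element $p\varphi(a)p=\varphi(a)$ already living in the corner. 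I expect this compression-and-projection-lifting step to be the technical heart of the argument.

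Once $\psi_n:A\rightarrow q_n(B/J_n)q_n$ is a $*$-homomorphism with $\pi_n(q_n)=p$, I extend each $\psi_n$ to $\widetilde{\psi}_n:\widetilde{A}\rightarrow B/J_n$ by sending the adjoined unit of $\widetilde{A}$ to $q_n$; concretely $\widetilde{\psi}_n(a+\mu\cdot 1)=\psi_n(a)+\mu q_n$. This is a genuine unital-into-the-corner $*$-homomorphism precisely because $q_n$ is a projection commuting with (indeed acting as the unit for) the range of $\psi_n$. Then $\pi_n\circ\widetilde{\psi}_n(a+\mu\cdot 1)=\pi_n(\psi_n(a))+\mu\,p\rightarrow\varphi(a)+\mu\,\varphi(1)=\varphi(a+\mu\cdot 1)$, using that the scalar part converges exactly and the $A$-part converges by construction. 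This gives the required sequence of approximate partial liftings and establishes that $\widetilde{A}$ is WSP.

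A remark on where care is needed: I would verify that for all sufficiently large $n$ the projection $q_n$ exists and that the compressed maps are honest $*$-homomorphisms (multiplicativity survives compression by a projection that dominates the range in the limit, but only approximately for each fixed $n$, so one must check that the multiplicative defect $\|q_n\psi_n(ab)q_n-q_n\psi_n(a)q_n\psi_n(b)q_n\|$ tends to zero and then invoke a standard perturbation to nearby exact homomorphisms, or else argue that $\psi_n(A)$ already lies in the corner so that no defect arises). The cleanest route is the latter: choose the lift $q_n$ of $p$ first and then apply WSP for $A$ with target $q_n(B/J_n)q_n$ rather than all of $B/J_n$, so that $\psi_n$ is multiplicative into the corner on the nose and the extension $\widetilde{\psi}_n$ is exactly a $*$-homomorphism for every $n$.
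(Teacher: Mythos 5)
Your closing ``cleanest route'' is essentially the paper's proof, but as written your proposal rests on one genuinely false justification and omits the setup that makes the corner trick legitimate. The false step is the claim that the projection $p=\varphi(1)\in B/J$ lifts to a projection in $B/J_n$ ``because $B/J_n\rightarrow B/J$ is a surjection of $C^*$-algebras and $p$ is a projection.'' Projections do not lift along general surjections: for $C[0,1]\rightarrow\mathbb{C}\oplus\mathbb{C}$, $f\mapsto(f(0),f(1))$, the projection $(1,0)$ has no projection preimage, since the only projections in $C[0,1]$ are $0$ and $1$. What saves the step here is the directed structure of the ideals: since $J=\overline{\bigcup_n J_n}$ with $J_n$ increasing, the semiprojectivity of $\mathbb{C}$ (Lemma 4.2.2 of \cite{Loring-lifting-perturbing}, which is exactly what the paper invokes) produces a projection $p\in B/J_{n_0}$ lifting $\varphi(1)$ for \emph{some} finite $n_0$, and its images under the quotient maps $B/J_{n_0}\rightarrow B/J_m$ then serve for all $m\geq n_0$. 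This is not a pedantic point: if projections lifted along arbitrary quotients, semiprojectivity of $\mathbb{C}$ would never be needed.

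Second, you correctly diagnose that your primary route (apply WSP of $A$ into $B/J_n$ first, then compress by $q_n$) fails: $a\mapsto q_n\psi_n(a)q_n$ is not a $*$-homomorphism, and the ``standard perturbation to nearby exact homomorphisms'' you invoke does not exist for a general separable $C^*$-algebra---approximate multiplicativity yields nearby honest homomorphisms only for algebras enjoying stability properties of roughly the kind being established. So everything rests on your one-sentence fallback, which must be completed as the paper does: fix \emph{one} stage $n_0$ and \emph{one} projection lift $p\in B/J_{n_0}$ of $\varphi(1)$; form the unital corner $C=p\left(B/J_{n_0}\right)p$ with the increasing ideals $K_m=J_m\cap C$ for $m\geq n_0$, whose closed union is $K=J\cap C$; observe that $C/K\cong\varphi(1)(B/J)\varphi(1)$ contains $\varphi(\widetilde{A})$, so that $\varphi$ restricted to $A$ poses a legitimate WSP lifting problem with target $C/K$ and chain $(K_m)$; obtain $\psi_m:A\rightarrow C/K_m$ from the WSP hypothesis; extend each $\psi_m$ unitally to $\widetilde{A}$ (possible precisely because $C/K_m$ is unital, with unit the image of $p$); and finally compose with the embeddings $C/K_m\hookrightarrow B/J_m$ induced by $C\subseteq B/J_{n_0}$ to return to the original problem, padding with zero maps for $m<n_0$. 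With those details supplied, your argument coincides with the paper's.
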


\begin{proof}
Assume $A$ is WSP and that we have $\varphi:\widetilde{A}\rightarrow B/J$
and the chain of ideals $J_{n}$. For some $n$ we can lift $\varphi(1)$
to a projection $p$ in $B/J_{n}$. Here we have used the semiprojectivity
of $\mathbb{C}$ (Lemma 4.2.2 in \cite{Loring-lifting-perturbing})
or the usual argument involving functional calculus and lifting the
relations $x^{*}=x^{2}=x$. Consider $C=p\left(B/J_{n}\right)p$,
which is a unital $C^{*}$-subalgebra of $B/J_{n}$, and
$K_{m}=J_{m}\cap C$ is an ideal of $C$, and the diagram
\[
\xycompileto{eqn116}{
\xymatrix{
	&	& C/K_m \ar[d] ^{\hat{\pi}_m} \ar[r] ^{\alpha}
			& B/J_m \ar[d] ^{\pi_m}
\\
A \ar[r] \ar[r] _{\iota}
	& {\widetilde{A}} \ar[r]^(0.45){\varphi_0} \ar@/_0.4cm/[rr] _{\varphi}
		& C/K \ar[r] \ar[r] ^{\beta}
			& B/J 
}
}
\]
where
\[
K=\bigcap_{m\geq n}K_{m}=J\cap C
\]
and the horizontal maps are induced by the inclusion of $C$ into
$B$. Applying the weak semiprojectivity of $A$ we find
$\psi_{m}:A\rightarrow C/K_{m}$ with
$\hat{\pi}_{m}\circ\psi_{m}\rightarrow\varphi_{0}\circ\iota$.
Since $C$ is unital we can extend this to
$\widetilde{\psi}_{m}:\widetilde{A}\rightarrow C/K_{m}$
with $\hat{\pi}_{m}\circ\widetilde{\psi}_{m}\rightarrow\varphi$.
Finally, we use $\alpha\circ\widetilde{\psi}_{m}$ as the needed
approximate lifts $A\rightarrow B/J_{m}$ for $m\geq n,$ filling
in with the zero map for $m<n$.
\end{proof}

\begin{prop}
\label{pro:WP_implies_WSP}
Let $A$ be a separable $C^{*}$-algebra. If $A$ is weakly projective
then $A$ is weakly semiprojective.
\end{prop}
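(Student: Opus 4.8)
The plan is to exploit the one structural difference between the two notions: weak projectivity already furnishes approximate lifts all the way up into $B$, while weak semiprojectivity asks only for approximate lifts into the intermediate quotients $B/J_n$. Pushing a lift into $B$ down to $B/J_n$ costs nothing, and the error, which is measured in $B/J$, is unaffected. This is the exact analog of the classical observation that a projective $C^{*}$-algebra is semiprojective.

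First I would lay out the weak semiprojectivity data that must be handled: a $*$-homomorphism $\varphi\colon A\rightarrow B/J$ with $B$ separable, an increasing chain $J_{1}\triangleleft J_{2}\triangleleft\cdots$ of ideals, and $J=\overline{\bigcup_{n}J_{n}}$. Introduce the quotient map $\pi\colon B\rightarrow B/J$, the quotient maps $q_{n}\colon B\rightarrow B/J_{n}$, and the induced surjections $\pi_{n}\colon B/J_{n}\rightarrow B/J$, so that the compatibility identity $\pi_{n}\circ q_{n}=\pi$ holds for every $n$. Next I would discard the chain entirely and apply the hypothesis that $A$ is weakly projective to the single pair $(B,J)$ and the map $\varphi$. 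Since $A$ is separable, this yields a sequence of $*$-homomorphisms $\phi_{n}\colon A\rightarrow B$ with $\pi\circ\phi_{n}(a)\rightarrow\varphi(a)$ for all $a$ in $A$.

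Finally I would set $\psi_{n}=q_{n}\circ\phi_{n}\colon A\rightarrow B/J_{n}$. Using $\pi_{n}\circ q_{n}=\pi$ we obtain $\pi_{n}\circ\psi_{n}(a)=\pi\circ\phi_{n}(a)\rightarrow\varphi(a)$ for all $a$, which is precisely the conclusion required by the definition of weak semiprojectivity. There is essentially no obstacle here: the only things to check are the elementary identity $\pi_{n}\circ q_{n}=\pi$ and that weak projectivity may be invoked in its sequential form, which is legitimate because $A$ is separable. I would close by remarking that the chain condition $J=\overline{\bigcup_{n}J_{n}}$ plays no role beyond naming the targets $B/J_{n}$; weak projectivity has already solved the strictly harder problem of lifting approximately into $B$ itself.
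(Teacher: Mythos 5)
Your proposal is correct and is exactly the argument the paper intends: weak projectivity gives approximate lifts $\phi_{n}\colon A\rightarrow B$ with $\pi\circ\phi_{n}(a)\rightarrow\varphi(a)$, and composing with the quotient maps $q_{n}\colon B\rightarrow B/J_{n}$ solves the WSP problem via $\pi_{n}\circ q_{n}=\pi$. The paper compresses this to ``and so we can easily solve the WSP lifting problem''; you have simply written out the routine verification it leaves implicit.
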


\begin{proof}
The definition given in \cite{LoringWeaklyProjective} of $A$ being
weakly projective (WP) is that we can approximately solve a lifting
problem
\[
\xycompileto{eqn117}{
\xymatrix{
	& B \ar[d] ^{\pi}
\\
A \ar[r] \ar@{-->}[ru] & B/J
}
}
\]
and so we can easily solve the WSP lifting
problem.
\end{proof}

We trust that the reader has noticed that the
diagram~(\ref{eqn:WSPbyLiftingDiagram})
is the dual of the diagram~(\ref{eqn:PAANRextensionDiagram}). There
is much that can said about the connection between AANR spaces and
WSP $C^{*}$-algebras---see \cite[2.8-9]{Blackadar-shape-theory}
and \cite[Theorem 1.3]{sorensen2011characterization}---but all that
really concerns us at the moment is that if we want $C_{0}(X)$ to
be WSP then a necessary condition is that $X$ be a PAANR.

\begin{thm}
\label{thm:WSP=00003D=00003D>PAANR}
Suppose $X$ is a locally compact, metrizable space. 
\begin{enumerate}
\item
If $C_{0}(X)$ is WSP then $\left(\alpha X,\infty\right)$ is a PAANR. 
\item
If $\left(\alpha X,\infty\right)$ is a PAANR then $C_{0}(X)$ is
WSP within the commutative category.
\end{enumerate}
\end{thm}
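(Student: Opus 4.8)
The plan is to set up a pointed Gelfand dictionary and then read off both implications from the approximate extension property of Theorem~\ref{thm:PAANRviaApproxExten}. Write $(\mathbb{X},x_0)=(\alpha X,\infty)$, so that $A=C_0(X)=C_0(\mathbb{X},x_0)$ is the pointed algebra of a compactum. The facts I would record first are: (i) every separable commutative $C^*$-algebra is of the form $C_0(Y,y_0)=\{g\in C(Y):g(y_0)=0\}$ for a pointed compactum $(Y,y_0)$, with $y_0$ isolated exactly when the algebra is unital; (ii) by the pointed correspondence recalled in the introduction, the $*$-homomorphisms $C_0(\mathbb{X},x_0)\to C_0(Y,y_0)$ are precisely the maps $f\mapsto f\circ\lambda$ for pointed continuous $\lambda:(Y,y_0)\to(\mathbb{X},x_0)$; and (iii) writing $B=C_0(Z,z_0)$, an increasing chain of ideals $J_1\triangleleft J_2\triangleleft\cdots$ with $J=\overline{\bigcup_n J_n}$ corresponds to a decreasing chain of closed sets $Y_1\supseteq Y_2\supseteq\cdots$ through $z_0$ with $\bigcap_n Y_n=Y$, the quotients $B/J_n$ become $C_0(Y_n,z_0)$, and the surjections $\pi_n:B/J_n\to B/J$ become restriction maps. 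The single analytic lemma I would isolate is a convergence translation: for pointed maps $\lambda_n,\lambda:(Y,y_0)\to(\mathbb{X},x_0)$ on a compactum $Y$, one has $f\circ\lambda_n\to f\circ\lambda$ uniformly for every $f\in C_0(\mathbb{X},x_0)$ if and only if $\lambda_n\to\lambda$ uniformly.

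For (2), given a WSP lifting problem with commutative $B$, I would apply the dictionary: write $B=C_0(Z,z_0)$, so the data $(J_n,J,\varphi)$ becomes a decreasing family of closed sets $Y_n\supseteq Y=\bigcap_n Y_n$ in $Z$ through the common basepoint $y_0:=z_0$, together with a pointed map $\lambda:(Y,y_0)\to(\mathbb{X},x_0)$ dual to $\varphi$. This is exactly the input of Theorem~\ref{thm:PAANRviaApproxExten}; since $(\mathbb{X},x_0)$ is assumed PAANR, that theorem produces pointed maps $\lambda_n:(Y_n,y_0)\to(\mathbb{X},x_0)$ with $\lambda_n|_Y\to\lambda$ uniformly. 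Dualizing gives $*$-homomorphisms $\psi_n:A\to B/J_n$, and the easy direction of the convergence lemma (uniform continuity of each $f$ on the compactum $\mathbb{X}$) turns uniform convergence of $\lambda_n|_Y$ into $\pi_n\circ\psi_n(a)\to\varphi(a)$ for all $a$, the desired approximate lifting.

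For (1), I would run the dictionary in reverse, verifying the extension property of Theorem~\ref{thm:PAANRviaApproxExten} (which by that theorem is equivalent to being a PAANR). Take arbitrary extension data: a compactum $Z$, a point $y_0$, a closed $Y\ni y_0$, decreasing closed $Y_n$ with $\bigcap_n Y_n=Y$, and a pointed $\lambda:(Y,y_0)\to(\mathbb{X},x_0)$. Set $B=C_0(Z,y_0)$, let $J_n$ and $J$ be the ideals of functions vanishing on $Y_n$ and on $Y$ (so $J_n$ is increasing and $\overline{\bigcup_n J_n}=J$ because $\bigcap_n Y_n=Y$), and let $\varphi:A\to B/J=C_0(Y,y_0)$ be dual to $\lambda$. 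Since $B$ is separable and commutative, WSP applies and yields $\psi_n:A\to B/J_n=C_0(Y_n,y_0)$ with $\pi_n\circ\psi_n(a)\to\varphi(a)$ for all $a$. Each $\psi_n$ is dual to a pointed map $\lambda_n:(Y_n,y_0)\to(\mathbb{X},x_0)$; the relation $\lambda_n(y_0)=x_0$ is \emph{automatic}, and this is precisely where the non-unitality of $A$ forces the pointed condition and yields a genuine PAANR rather than merely an AANR. The harder direction of the convergence lemma then upgrades pointwise norm convergence to uniform convergence $\lambda_n|_Y\to\lambda$, completing the extension property.

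The main obstacle is the harder direction of the convergence lemma: that $f\circ\lambda_n\to f\circ\lambda$ uniformly for every $f\in C_0(\mathbb{X},x_0)$ forces $\lambda_n\to\lambda$ uniformly. I would argue the contrapositive by compactness. A failure yields, after passing to a subsequence, points $y_k\to y^*$ in $Y$ with $\lambda_{n_k}(y_k)\to u$ and $\lambda(y_k)\to\lambda(y^*)=v$ where $d(u,v)\geq\epsilon$, hence $u\neq v$; since $C_0(\mathbb{X},x_0)$ separates all points of the compactum $\mathbb{X}$ (including separating each point from $x_0$), some single $f$ has $f(u)\neq f(v)$, and evaluating $f\circ\lambda_{n_k}-f\circ\lambda$ at $y_k$ contradicts uniform convergence. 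The remaining work is bookkeeping: checking that the basepoint $z_0$ is shared by every quotient $B/J_n$ and $B/J$ (it is, since the corresponding character pulls back along each quotient map), and verifying $\overline{\bigcup_n J_n}=J \Leftrightarrow \bigcap_n Y_n=Y$ via the standard compactness argument that a function vanishing on $Y$ is uniformly small on $Y_n$ for large $n$.
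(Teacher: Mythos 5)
Your proposal is correct and follows essentially the same route as the paper: both reduce the statement to the approximate extension property of Theorem~\ref{thm:PAANRviaApproxExten} via the Gelfand-type dictionary between pointed compacta and separable commutative $C^{*}$-algebras, dualizing the lifting diagram in one direction and the extension diagram in the other. The only difference is one of detail, not of method: you spell out the compactness/point-separation argument behind the equivalence ``$f\circ\lambda_{n}\rightarrow f\circ\lambda$ uniformly for all $f$ iff $\lambda_{n}\rightarrow\lambda$ uniformly,'' which the paper asserts in one line.
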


\begin{proof}
Theorem~\ref{thm:PAANRviaApproxExten} tells us that to show
$\left(\alpha X,\infty\right)$ is a PAANR, we need to handle the
approximate extension as in diagram~(\ref{eqn:PAANRextensionDiagram}).
In terms of the induced $*$-homomorphisms, $\lambda$ and the
inclusions give us the diagram
\begin{equation}
\xycompileto{eqn119}{\xymatrix{
 &	C_0(Z, y_0) \ar@{->>}[d]\\
 & C_0(Y_n, y_0)  \ar@{->>}[d] ^{\rho_n}  \\
(\alpha X, \infty ) \ar[r] _(0.48){\lambda^*} \ar@{-->}[ru] ^{\varphi_n}
	 & C_0(Y,y_0) 
}
}\label{eqn:commutativeLift}
\end{equation}
where Definition~\ref{def:WSP} provides us with $\varphi_{n}$ as
in the diagram with
\[
\lim_{n\rightarrow\infty}\left\Vert \rho_{n}\circ\varphi_{n}(f)-f\circ\lambda\right\Vert
=0
\]
for all $f$ in $C_{0}(\alpha X,\infty)$. Since $\varphi_{n}$ is
induced by some map $\lambda_{n}$ of pointed compacta, this is saying
\[
\lim_{n\rightarrow\infty}\sup_{y\in Y}\left| f(\lambda_{n}(y))-f(\lambda(y))\right|
=0.
\]
As this is true for all $f$, we conclude $\lambda_{n}(y)\rightarrow\lambda(y)$
uniformly over $y$ in $Y$. For the second claim, we note simply
that in the commutative situation, up to isomorphism the only liftings
we need are those in (\ref{eqn:commutativeLift}).
\end{proof}

\begin{example}
\label{exa:de-unitizationCanFail}
Consider $A_{0}=C_{0}\left(X\setminus\{x_{0}\}\right)$ and
$A_{1}=C_{0}\left(X\setminus\{x_{0}\}\right)$, where $X$ is the
topologist's sine curve and the named points are as in
Figure~\ref{fig:topologist's-sine-curve}(b).  The claim is that
$A_{1}$ and $\widetilde{A_{0}}\cong\widetilde{A_{1}}\cong C(X)$
are all WSP, while $A_{0}$ is not WSP. In \cite{LoringWeaklyProjective}
we showed that $A_{1}$ is WP, and so
Propositions~\ref{pro:A-WSP=00003D=00003D>A-tilde-WSP} and~\ref{pro:WP_implies_WSP}
imply that $\widetilde{A_{1}}$ is WSP.  Of course this means
$\widetilde{A_{0}}$ is WSP. Example~\ref{exa:basePointMatters-1}
shows $\left(X,x_{0}\right)$ is not a PAANR, so by
Lemma~\ref{thm:WSP=00003D=00003D>PAANR}, $A_{0}$ is not WSP.
\end{example}

We do find that the WSP property behaves well with direct sums.

\begin{thm}
\label{thm:sumWSP_iff_eachWSP}
Suppose $A_{1}$ and $A_{2}$ are
separable $C^{*}$-algebras. Then $A_{1}\oplus A_{2}$ is WSP if,
and only if, both $A_{1}$ and $A_{2}$ are WSP.
\end{thm}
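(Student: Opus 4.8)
The plan is to prove both directions by manipulating the standard lifting diagram in Definition~\ref{def:WSP}, exploiting that a direct sum $A_1 \oplus A_2$ comes equipped with the two central projections $e_1 = (1_{\widetilde{A_1}}, 0)$ and $e_2 = (0, 1_{\widetilde{A_2}})$ sitting in the multiplier algebra, and with the two coordinate embeddings $A_i \hookrightarrow A_1 \oplus A_2$ and coordinate projections $A_1 \oplus A_2 \twoheadrightarrow A_i$. These maps let us pass data back and forth between the summands and the sum.

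\textbf{The easy direction} is that $A_1 \oplus A_2$ WSP implies each $A_i$ is WSP. First I would take a lifting problem $\varphi : A_i \to B/J$ with $J = \overline{\bigcup_n J_n}$. I would precompose with the coordinate projection $A_1 \oplus A_2 \to A_i$ to manufacture a $*$-homomorphism $\Phi : A_1 \oplus A_2 \to B/J$, apply WSP of the sum to obtain approximate lifts $\Psi_n : A_1 \oplus A_2 \to B/J_n$ with $\pi_n \circ \Psi_n \to \Phi$, and then restrict each $\Psi_n$ along the coordinate embedding $A_i \hookrightarrow A_1 \oplus A_2$. Since the composite of embedding then projection is the identity on $A_i$, the convergence $\pi_n \circ \Psi_n \to \Phi$ pulls back to the desired $\pi_n \circ \psi_n(a) \to \varphi(a)$ for all $a \in A_i$.

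\textbf{The harder direction} is that both $A_i$ WSP implies the sum is WSP, and this is where I expect the main obstacle. Given $\varphi : A_1 \oplus A_2 \to B/J$, the orthogonal projections $\varphi(e_1)$ and $\varphi(e_2)$ are complementary projections in the multiplier algebra of $B/J$ (or in $B/J$ itself when $A_1 \oplus A_2$ is unital); the images $\varphi(A_i)$ are orthogonal $C^*$-subalgebras. I would restrict $\varphi$ to each summand, getting $\varphi_i : A_i \to B/J$, apply WSP of $A_i$ to obtain approximate partial lifts $\psi_n^{(i)} : A_i \to B/J_n$, and then attempt to assemble these into a single lift $\psi_n = \psi_n^{(1)} \oplus \psi_n^{(2)} : A_1 \oplus A_2 \to B/J_n$. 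The difficulty is that the lifted images $\psi_n^{(1)}(A_1)$ and $\psi_n^{(2)}(A_2)$ need not be orthogonal in $B/J_n$ — orthogonality holds only after applying $\pi_n$ and passing to $B/J$. To define a genuine $*$-homomorphism on the sum one needs the two pieces to commute and multiply to zero exactly, not just approximately. The fix I would pursue is to lift a single projection first: use semiprojectivity of $\mathbb{C}$ (as invoked in the proof of Proposition~\ref{pro:A-WSP=00003D=00003D>A-tilde-WSP}) to lift $\varphi(e_1)$ to an honest projection $p \in B/J_n$ for some large $n$, cut down to the corner $p(B/J_n)p$ and its complement $(1-p)(B/J_n)(1-p)$, run the WSP lifts for $A_1$ and $A_2$ inside these two orthogonal corners respectively, and then take the direct sum, which is automatically a well-defined $*$-homomorphism on $A_1 \oplus A_2$ because the corners are orthogonal by construction. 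Reindexing so that both families of approximate lifts are defined past the same stage $n$, and filling in with the zero map for small indices, completes the argument. The technical care needed to lift $\varphi(e_1)$ compatibly — so that the complementary corner genuinely receives the $A_2$ data — is the crux, and it mirrors exactly the projection-lifting step already used in the proof of Proposition~\ref{pro:A-WSP=00003D=00003D>A-tilde-WSP}.
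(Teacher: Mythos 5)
Your easy direction is exactly the paper's argument and is fine. The gap is in the hard direction, at the step ``use semiprojectivity of $\mathbb{C}$ \ldots{} to lift $\varphi(e_1)$ to an honest projection $p\in B/J_n$.'' The theorem concerns arbitrary separable $C^{*}$-algebras, so $A_1$ and $A_2$ will in general be non-unital (this is the case the paper actually cares about, e.g.\ $C_{0}(X\setminus\{x_{0}\})$), and then $e_1=(1_{\widetilde{A_1}},0)$ lives only in the multiplier algebra $M(A_1\oplus A_2)$. Consequently $\varphi(e_1)$ is not an element of $B/J$ at all: to make sense of it you must extend $\varphi$ to multipliers, and that extension takes values in $M(D)$, where $D$ is the hereditary subalgebra of $B/J$ generated by $\varphi(A_1\oplus A_2)$ --- not in $B/J$, and not even naturally in $M(B/J)$. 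Semiprojectivity of $\mathbb{C}$ says nothing about such projections: it lifts projections lying \emph{in} the quotient $B/J$ along the chain of quotients $B/J_n$, and gives no conclusion for projections in multiplier-type algebras sitting over $B/J$. In Proposition~\ref{pro:A-WSP=00003D=00003D>A-tilde-WSP} this device is legitimate only because there $\varphi$ is defined on the unital algebra $\widetilde{A}$, so $\varphi(1)$ genuinely is a projection in $B/J$. As written, your argument proves the theorem only when both $A_1$ and $A_2$ are unital.

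The paper repairs exactly this point by replacing the unit projections with strictly positive elements $h_1\in A_1$ and $h_2\in A_2$, which exist by separability. Then $\varphi((h_1,0))$ and $\varphi((0,h_2))$ are orthogonal positive elements of $B/J$ itself, and the projectivity of $C_0(0,1]\oplus C_0(0,1]$ --- equivalently, the fact that orthogonal positive contractions lift to orthogonal positive contractions, Proposition 10.1.10 of \cite{Loring-lifting-perturbing} --- produces orthogonal positive lifts $k_1,k_2\in B$. The ``corners'' you wanted are then the hereditary subalgebras $B_j=\overline{k_jBk_j}$, which are exactly (not approximately) orthogonal in $B$, so $B_1+B_2$ is a copy of $B_1\oplus B_2$ inside $B$ through which the whole lifting problem factors; the two coordinate problems are then solved separately by WSP of $A_1$ and of $A_2$ and added, just as in your outline. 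Note that this reliance on hereditary subalgebras generated by positive elements, rather than on corners cut by lifted projections, is precisely what the Remark following the theorem flags as the technique that is unavailable in the unital category --- which is why WSP1, unlike WSP, fails to be closed under direct sums.
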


\begin{proof}
Suppose $A_{1}\oplus A_{2}$ is weakly semiprojective and that we
are given $\varphi:A_{1}\rightarrow B/J$ where $J$ is an ideal,
etc. We utilize the inclusion $\iota_{j}:A_{j}\rightarrow A_{1}\oplus A_{2}$
and the projection $\gamma_{j}:A_{1}\oplus A_{2}\rightarrow A_{j}$
in considering the diagram
\[
\xycompileto{eqn120}{\xymatrix{
	&	&	& B \ar[d]
\\	
	&	&	& B/J_n \ar[d] ^{\pi_n} 
\\
A_j \ar[r] ^(0.4){\iota_j} 
	& A_1 \oplus A_2 \ar[r] ^(0.6){\gamma_j}
		& A_j \ar[r] ^{\varphi}
			& B/J
}
}
\]
We have $\varphi_{n}:A_{1}\oplus A_{2}\rightarrow B/J_{n}$
with $\pi_{n}\circ\varphi_{n}(x)\rightarrow\varphi\circ\gamma_{j}(x)$
for all $x$ in $A_{1}\oplus A_{2}$. Therefore
\[
\lim\left\Vert \varphi(a)-\pi_{n}\circ\varphi_{n}\circ\iota_{j}(a)\right\Vert 
=
\lim\left\Vert \varphi\circ\gamma_{j}\left(\iota_{j}(a)\right)-\pi_{n}\circ\varphi_{n}\left(\iota_{j}(a)\right)\right\Vert 
=
0.
\]

Now assume $A_{1}$ and $A_{2}$ are weakly semiprojective and that
we are given $\varphi:A_{1}\oplus A_{2}\rightarrow B/J$ and so forth.
Let $h_{1}$ and $h_{2}$ be strictly positive elements in $A_{1}$
and $A_{2}$ and consider $\varphi((h_{1},0))$ and $\varphi((0,h_{2}))$.
These are orthogonal positive elements, and so can be lifted to orthogonal
positive elements $k_{1}$ and $k_{2}$ in $B$. The argument here
depends on the projectivity of $C_{0}(0,1]\oplus C_{0}(0,1]$, which
is equivalent to the argument that orthogonal, positive contractions
lift to orthogonal, positive contractions, Proposition 10.1.10 in
\cite{Loring-lifting-perturbing}. Inside $B$ we form
$B_{j}=\overline{k_{j}Bk_{j}}$, that is the hereditary subalgebra
generated by $k_{j}$, and as these two $C^{*}$-subalgebras are
orthogonal, we have the copy $B_{1}+B_{2}$ of $B_{1}\oplus B_{2}$ in
$B$. The image of $B_{1}$ under $\pi$ includes
$\varphi((h_{1},0))(B/J)\varphi((h_{1},0))$ and so all of
$\varphi(A_{1}\oplus0)$. Similarly $\pi(B_{2})$ contains
$\varphi(0\oplus A_{2})$ and so the image of $\varphi$ is contained
in the image of $\pi$. If we consider $\pi$ restricted to $B_{j}$
we find it has kernel
\[
J\cap\overline{k_{j}Bk_{j}}=\overline{k_{j}Jk_{j}},
\]
where for the inclusion of left into right we use the approximate
identity $k_{j}^{t}$ for $B_{j}$. We have also a chain of ideals
$\overline{k_{j}J_{n}k_{j}}$ with intersection
$\overline{k_{j}Jk_{j}}$.
We have then a commutative diagram 
\[
\xycompileto{eqn121}{\xymatrix@R=1.2cm@C=1.8cm{
	& B_1/\overline{k_1J_nk_1} \oplus B_2/\overline{k_2J_nk_2} 
		\ar[d] ^{\pi_n \oplus \pi_n} \ar[r] ^(0.7){\alpha_n}
		& B/J_n \ar[d] ^{\pi_n}
\\
A_1 \oplus A_2 \ar[r] ^(0.33){\varphi_0} \ar@/_0.7cm/[rr] _{\varphi} 
	& B_1 \left / \overline{k_1Jk_1} \right . \oplus B_2 \left / \overline{k_2Jk_2} \right . 
		\ar[r] ^(0.7){\alpha}
		& B/J
}
}
\]
and it is evident
how we can use approximate lifting of maps from $A_{1}$ and
$A_{2}$ to create the desired approximate lifting of the maps
from $A_{1}\oplus A_{2}$.
\end{proof}

\begin{rem}
It is important to note that we used the fact that $\overline{hBh}$
was again a $C^{*}$-algebra when $h$ in $B$ is positive, and will
lose this technique when we restrict to lifting problems involving
only unital $C^{*}$-algebras.
\end{rem}

\begin{defn}
\label{def:WSP1}
Let $A$ be a separable $C^{*}$-algebra $A$, not necessarily unital.
We say $A$ is \emph{weakly semiprojective with respect to unital
$C^{*}$-algebras} (WSP1) if we can solve the partial approximate
lifting problem in Definition~\ref{def:WSP} in the special case where
$B$ is a unital separable $C^{*}$-algebra. 
\end{defn}

There is already a definition of weakly semiprojective with respect
to the class of all unital $C^{*}$-algebras, Definition~5.2 in
\cite{EilersLoringContingenciesStableRelations},
but it is equivalent to the one given here.

\begin{rem}
Even if $A$ has a unit, we are not requiring $\varphi$ or the $\varphi_{n}$
to be unital. In particular, if we have $\varphi_{n_{k}}:A\rightarrow B/J_{n_{k}}$
with $\pi_{n_{k}}\circ\psi_{n_{k}}(a)\rightarrow a$ then we can pad
this out with zero maps, and use the intermediate quotient maps
$B/J_{n}\rightarrow B/J_{n+1}$,  to get the required sequence
$\varphi_{n}:A\rightarrow B/J_{n}$.  This was true for weak
semiprojectivity. A common formulation of weak semiprojectivity is that
given $a_{1},\dots,a_{r}$ and $\epsilon>0$ there is 
$\psi:A\rightarrow B/J_{n}$ for some $n$ with
$\left\Vert \pi_{n}\circ\psi(a_{j})-\varphi(a_{j})\right\Vert <\epsilon$.
\end{rem}

\begin{thm}
\label{thm:WSPviaProduct/Sum}
If $A$ is a separable $C^{*}$-algebra then following are equivalent:
\begin{enumerate}
\item
$A$ is WSP1;
\item
the partial approximate lifting problem in Definition~\ref{def:WSP}
can be solved whenever $B$ is a unital $C^{*}$-algebra (so $B$
is not necessarily separable);
\item
Given a $*$-homomorphism
$\varphi:A\rightarrow\left.\prod B_{k}\right/\bigoplus B_{k}$
with $B_{1},B_{2},\dots$ a sequence of unital $C^{*}$-algebras,
there is a $*$-homomorphism $\overline{\varphi}:A\rightarrow\prod B_{k}$
so that $\kappa\circ\overline{\varphi}=\varphi$. Here the sum and
products are indexed by $\mathbb{N}$ and $\kappa$ is the quotient
map.
\end{enumerate}
\end{thm}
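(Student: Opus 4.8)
The plan is to prove the three conditions equivalent by establishing the cycle $(1)\Rightarrow(2)\Rightarrow(3)\Rightarrow(1)$. The implication $(2)\Rightarrow(1)$ is of course immediate, but it cannot close a cycle, and a direct route $(1)\Rightarrow(3)$ is blocked because the algebra $\prod B_k$ appearing in (3) is typically non-separable, so WSP1 in the narrow sense of (1) does not apply to it. This is exactly why condition (2), which frees us from separability of $B$, is interposed. Throughout I will use the reformulation recorded in the Remark after Definition~\ref{def:WSP1}, that an approximate lifting amounts to solving, for each finite $F\subseteq A$ and each $\epsilon>0$, a single partial lift that is $\epsilon$-good on $F$, and I will lean on the fact that a separable $A$ has a countable dense subset. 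The whole scheme parallels the product/sum characterization of ordinary weak semiprojectivity from \cite{EilersLoringContingenciesStableRelations}.

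For $(1)\Rightarrow(2)$ I would run a separabilization. Given a possibly non-separable unital $B$, a chain $J_1\triangleleft J_2\triangleleft\cdots$ with $J=\overline{\bigcup_n J_n}$, and $\varphi:A\to B/J$, the image $\varphi(A)$ is separable, so I may lift a countable dense subset of it to elements $\tilde b_j\in B$. I then build an increasing sequence of separable subalgebras $D_1\subseteq D_2\subseteq\cdots$, starting from $C^*(1_B,\{\tilde b_j\})$, where at stage $m$ I adjoin, for every element of a countable dense subset of $D_m$ and every $n$, a near-optimal approximant drawn from $J_n$. Setting $B_0=\overline{\bigcup_m D_m}$, this forces $\operatorname{dist}(x,J_n\cap B_0)=\operatorname{dist}(x,J_n)$ for all $x\in B_0$ and all $n$, and hence $\overline{\bigcup_n(J_n\cap B_0)}=J\cap B_0$. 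With $J_n^0:=J_n\cap B_0$ and $J^0:=J\cap B_0$, the inclusion induces an isometric embedding $B_0/J^0\hookrightarrow B/J$ whose image contains $\varphi(A)$, so $\varphi$ corestricts to a genuine $*$-homomorphism $\varphi_0:A\to B_0/J^0$; it is precisely the equality $J^0=\overline{\bigcup_n J_n^0}$ that makes this corestriction well defined rather than another lifting problem. Applying WSP1 to $(B_0,J_n^0,\varphi_0)$ yields approximate lifts $A\to B_0/J_n^0$, which I compose with $B_0/J_n^0\hookrightarrow B/J_n$ to obtain the lifts demanded by (2).

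For $(2)\Rightarrow(3)$, the heart of the matter, I take $B=\prod_k B_k$ (unital) with $J=\bigoplus_k B_k$ and $J_n$ the ideal of sequences supported in the first $n$ coordinates, so $\overline{\bigcup_n J_n}=J$ and $B/J_n\cong\prod_{k>n}B_k$. Condition (2) supplies $*$-homomorphisms $\varphi_n:A\to\prod_{k>n}B_k$ with $\pi_n\circ\varphi_n(a)\to\varphi(a)$, and I must splice these into one \emph{exact} lift $\overline\varphi:A\to\prod_k B_k$. I would do this blockwise: partition the coordinate axis into intervals $(N_r,N_{r+1}]$ with $N_r\ge r$, and on the $r$-th block define the coordinates of $\overline\varphi$ from the corresponding coordinates of $\varphi_r$; since $\varphi_r$ lives on coordinates $>r$ and $N_r\ge r$ this makes sense, and $\overline\varphi$ is a $*$-homomorphism as a product of its coordinate maps. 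To see $\kappa\circ\overline\varphi=\varphi$, one fixes a set-theoretic lift of each $\varphi(a)$ and checks that the tail norm of the difference vanishes; choosing the cut-offs $N_r$ large relative to a fixed countable dense subset $\{a_j\}$ of $A$ and to the decay of $\|\pi_r\varphi_r(a_j)-\varphi(a_j)\|$ forces this for every $a_j$, and continuity of $*$-homomorphisms then gives it on all of $A$. This interleaving is the step I expect to require the most care, precisely because the various $\varphi_n$ disagree coordinatewise and only separability of $A$ lets one diagonal choice work simultaneously for all elements.

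Finally $(3)\Rightarrow(1)$ is clean. Given WSP1 data, namely a unital separable $B$, a chain $J_n$ with $J=\overline{\bigcup J_n}$, and $\varphi:A\to B/J$, I set $B_k:=B/J_k$, which are unital, and observe that $b\in J$ forces $\|b+J_k\|\to0$, so that $b\mapsto(b+J_k)_k$ descends to a $*$-homomorphism $\beta:B/J\to\left.\prod_k(B/J_k)\right/\bigoplus_k(B/J_k)$. Applying (3) to $\beta\circ\varphi$ produces an exact lift whose $k$-th coordinate is a $*$-homomorphism $\psi_k:A\to B/J_k$; unwinding $\kappa\circ\overline{\beta\varphi}=\beta\circ\varphi$ with a chosen lift $b_a$ of each $\varphi(a)$ shows $\|\psi_k(a)-(b_a+J_k)\|\to0$, and applying the contraction $\pi_k:B/J_k\to B/J$ gives $\pi_k\circ\psi_k(a)\to\varphi(a)$. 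These $\psi_k$ are the desired approximate lifts, completing the cycle.
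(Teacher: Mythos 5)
Your proposal is correct, and its first leg coincides with the paper's own argument: the paper also proves the equivalence of (1) and (2) by separabilization, lifting a countable dense subset of $\varphi(A)$ to $B$, letting $\hat{B}$ be the unital separable $C^{*}$-subalgebra it generates, intersecting the ideals with $\hat{B}$, and factoring $\varphi$ through $\hat{B}/\hat{J}$. One remark on your version of that step: the recursive construction of the algebras $D_{m}$, adjoining near-optimal approximants from the $J_{n}$, is superfluous. For any $C^{*}$-subalgebra $C\subseteq B$ and any $x\in C$ one has $\operatorname{dist}(x,J_{n}\cap C)=\operatorname{dist}(x,J_{n})$ automatically, because the injective $*$-homomorphism $C/(J_{n}\cap C)\rightarrow B/J_{n}$ is isometric --- the very same fact you invoke for the embedding $B_{0}/J^{0}\hookrightarrow B/J$ --- so the first algebra $C^{*}(1_{B},\{\tilde{b}_{j}\})$ already satisfies $\overline{\bigcup_{n}(J_{n}\cap C)}=J\cap C$. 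Where you genuinely diverge from the paper is in the treatment of (3): the paper disposes of the equivalence of (2) and (3) in two sentences by citing Theorem 3.1 of \cite{EilersLoringContingenciesStableRelations} and observing that its proof works verbatim when all target algebras are unital, the essential point being that the unital class is closed under quotients and countable sums/products. You instead re-derive that content explicitly: the blockwise interleaving of the approximate lifts $\varphi_{r}$ into a single exact lift for $(2)\Rightarrow(3)$ --- correctly handling the diagonal choice of cut-offs $N_{r}$ against a countable dense subset of $A$, which is exactly the delicate point --- and the direct argument $(3)\Rightarrow(1)$ via $B_{k}=B/J_{k}$ and the induced map $\beta$, arranged as a cycle $(1)\Rightarrow(2)\Rightarrow(3)\Rightarrow(1)$. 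Your route buys self-containedness at the cost of length; the paper's buys brevity and isolates the one structural fact needed to transplant the cited proof. Two trivial points to tidy: coordinates $k\leq N_{1}$ of $\overline{\varphi}$ are not assigned by your blocks (set them to $0$, which is harmless modulo $\bigoplus_{k}B_{k}$), and the equality $J^{0}=\overline{\bigcup_{n}J_{n}^{0}}$ is what makes the WSP1 lifting problem for $B_{0}$ well-posed rather than what makes the corestriction well defined.
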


\begin{proof}
Clearly (2) implies (1). For the reverse, assume we are facing
$\varphi:A\rightarrow B/J$
and so forth with $B$ not countable. Take a countable dense subset
in $A$, push these forward with $\varphi$ to the quotient and then
take a random lift of this set to a countable set in $B$. Let $\hat{B}$
be the $C^{*}$-algebra generated by this set and the unit, and
$\hat{J}_{n}=J_{n}\cap\hat{B}.$
These nested ideals of $\hat{B}$ have intersection $\hat{J}=J\cap\hat{B}$
and we can factor $\varphi$ through $\hat{B}/\hat{J}$, which we
treat as a subset of $B/J$, leading us to
\[
\xycompileto{eqn115}{\xymatrix{
	&\hat{B}/\hat{J}_n \ar[r] \ar[d]
		&B/J_n \ar[d]
\\
A \ar[r]^{\varphi_0} \ar@/_0.4cm/[rr] _{\varphi}
	& \hat{B}/\hat{J} \ar[r]
		& B/J
}}
\]
which commutes and has $\hat{B}$ separable and unital. The approximate
partial lifts of $\varphi_{0}$ can be composed with the inclusions
of the $\hat{B}/\hat{J_{n}}$ into the $B/J_{n}$, solving the problem.

For the equivalence of (2) and (3) we note that the proof of Theorem
3.1 in \cite{EilersLoringContingenciesStableRelations} works just
as written in the case where the various target $C^{*}$-algebras
are unital. What is essential is that the class of unital
$C^{*}$-algebras is closed under quotients and countable direct sums.
\end{proof}

\begin{thm}
\label{thm:WSP1_iff_tilde_A_isWSP}
Suppose $A$ is a separable $C^{*}$-algebra.
Then $A$ is WSP1 if, and only if, $\widetilde{A}$ is WSP.
\end{thm}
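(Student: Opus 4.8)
The plan is to prove the two implications separately, observing that each direction is a short variation on an argument already at hand.

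For the implication that $\widetilde{A}$ WSP forces $A$ WSP1, I would start from a lifting problem $\varphi:A\rightarrow B/J$ in which $B$ is unital and separable and $J=\overline{\bigcup_{n}J_{n}}$. Because $B$ is unital, so is the quotient $B/J$, and therefore $\varphi$ admits a unital extension $\widetilde{\varphi}:\widetilde{A}\rightarrow B/J$ given by $\widetilde{\varphi}(a+\lambda 1)=\varphi(a)+\lambda 1_{B/J}$; one checks multiplicativity directly. Applying the WSP property of $\widetilde{A}$ to $\widetilde{\varphi}$, with the very same $B$, $J$, and chain $J_{n}$, produces $*$-homomorphisms $\widetilde{\psi}_{n}:\widetilde{A}\rightarrow B/J_{n}$ with $\pi_{n}\circ\widetilde{\psi}_{n}(x)\rightarrow\widetilde{\varphi}(x)$ for all $x$. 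Restricting each $\widetilde{\psi}_{n}$ to $A$ then gives the required approximate lifts of $\varphi$, since $\widetilde{\varphi}|_{A}=\varphi$. This direction presents no real obstacle.

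The substantive direction is that $A$ WSP1 forces $\widetilde{A}$ WSP, and here the plan is to re-run the proof of Proposition~\ref{pro:A-WSP=00003D=00003D>A-tilde-WSP} and notice that it never used more than WSP1 of $A$. Concretely, given $\varphi:\widetilde{A}\rightarrow B/J$ with $B$ separable (now \emph{not} assumed unital) and the chain $J_{n}$, I would first lift the projection $\varphi(1)$ to a projection $p\in B/J_{n_{0}}$ for some $n_{0}$, using the semiprojectivity of $\mathbb{C}$ exactly as before. The corner $C=p(B/J_{n_{0}})p$ is a \emph{unital}, separable $C^{*}$-algebra, the sets $K_{m}=J_{m}\cap C$ form a decreasing chain of ideals with intersection $K=J\cap C$, and $\varphi$ factors through the inclusion $C/K\hookrightarrow B/J$ as $\varphi=\alpha\circ\varphi_{0}$.

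The key point is that the weak semiprojectivity of $A$ is invoked only for the restricted map $\varphi_{0}\circ\iota:A\rightarrow C/K$, whose ambient algebra is the unital $C$; hence WSP1 of $A$ is exactly what is needed to produce $\psi_{m}:A\rightarrow C/K_{m}$ with $\hat{\pi}_{m}\circ\psi_{m}\rightarrow\varphi_{0}\circ\iota$. Because $C$ is unital I would extend each $\psi_{m}$ to a unital $\widetilde{\psi}_{m}:\widetilde{A}\rightarrow C/K_{m}$, check that $\hat{\pi}_{m}\circ\widetilde{\psi}_{m}\rightarrow\varphi_{0}$ (the unit is carried exactly, the rest approximately), and finally compose with $\alpha:C/K_{m}\rightarrow B/J_{m}$ to obtain the desired approximate lifts $\widetilde{A}\rightarrow B/J_{m}$ for $m\geq n_{0}$, padding with the zero map for $m<n_{0}$. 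The only things to be careful about are that $C$ is genuinely a unital corner and that the ideals $K_{m}$ behave as claimed, but these are the same routine verifications already carried out in Proposition~\ref{pro:A-WSP=00003D=00003D>A-tilde-WSP}; the real content is the observation that that proof is secretly a proof about WSP1, so that weakening the hypothesis on $A$ from WSP to WSP1 is precisely compensated by strengthening the conclusion from $A$ to its unitization.
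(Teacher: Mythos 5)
Your proposal is correct and matches the paper's proof in both directions: for $\widetilde{A}$ WSP $\Rightarrow$ $A$ WSP1 the paper likewise extends $\varphi$ to $\hat{\varphi}:\widetilde{A}\rightarrow B/J$ using the unit of $B/J$ and restricts the resulting approximate lifts to $A$, and for $A$ WSP1 $\Rightarrow$ $\widetilde{A}$ WSP the paper likewise observes that the corner-cutting argument of Proposition~\ref{pro:A-WSP=00003D=00003D>A-tilde-WSP} only ever poses the lifting problem for $A$ inside the unital corner $C=p(B/J_{n_{0}})p$, so WSP1 suffices. One cosmetic slip (which mirrors a typo in the paper itself): since the $J_{n}$ are increasing, the ideals $K_{m}$ form an \emph{increasing} chain with $K=\overline{\bigcup_{m}K_{m}}$, not a decreasing chain with intersection $K$.
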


\begin{proof}
The proof of the forward implication of
Proposition~\ref{pro:A-WSP=00003D=00003D>A-tilde-WSP}
works here to give the forward implication. Notice that we started
with $B$ possibly lacking a unit, but then cut down by a projection
to create a $C^{*}$-subalgebra $C$ that was unital.

Now suppose $\widetilde{A}$ is WSP and we are given a map from $A$
over to a unital situation we can extend to $\widetilde{A}$ using
the unit in $B$. Lift the bigger $C^{*}$-algebra and the smaller
goes along from the ride.

Assume $\widetilde{A}$ is WSP and we have $\varphi:A\rightarrow B/J$
with $B$ unital. Since $B/J$ is also unital, we can extend $\varphi$
to a map $\hat{\varphi}:\widetilde{A}\rightarrow B/J$ and so we arrive
at this diagram 
\[
\xycompileto{eqn122}{\xymatrix{
	&	& B \ar[d]
\\
	&	& B/J_n \ar[d] ^{\pi_n}
\\
A \ar[r] ^{\iota} \ar@/_0.4cm/[rr] _{\varphi}
	& {\widetilde{A}} \ar[r] ^{\hat{\varphi}} \ar@{-->}[ru] ^{\psi_n}
		&B/J
}
}
\]
where we use the assumption on
$\widetilde{A}$ to find $\psi_{n}:\widetilde{A}\rightarrow B/J_{n}$
with $\pi_{n}\circ\psi_{n}\rightarrow\hat{\varphi}$. The desired
approximate lifts are the compositions $\iota\circ\psi_{n}$.
\end{proof}

\begin{thm}
Suppose $A$ and $B$ are separable $C^{*}$-algebras. If $A\oplus B$
is WSP1 then both $A$ and $B$ are WSP1.
\end{thm}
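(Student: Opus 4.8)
The plan is to adapt the forward half of the proof of Theorem~\ref{thm:sumWSP_iff_eachWSP}, the key observation being that that argument never leaves the class of unital target algebras and so survives the restriction imposed by WSP1. By symmetry it will suffice to prove that $A$ is WSP1. So I would begin by supposing we are handed a $*$-homomorphism $\varphi:A\to D/J$, where $D$ is a unital separable $C^{*}$-algebra with ideal $J=\overline{\bigcup_n J_n}$ and $J_1\triangleleft J_2\triangleleft\cdots$ increasing; the goal is to produce a sequence $\psi_n:A\to D/J_n$ with $\pi_n\circ\psi_n(a)\to\varphi(a)$ for all $a$ in $A$.

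Next I would introduce the coordinate projection $\gamma_A:A\oplus B\to A$ and the inclusion $\iota_A:A\to A\oplus B$, $a\mapsto(a,0)$, which satisfy $\gamma_A\circ\iota_A=\mathrm{id}_A$. The composite $\varphi\circ\gamma_A:A\oplus B\to D/J$ is a $*$-homomorphism into a quotient of the \emph{unital} algebra $D$, so the hypothesis that $A\oplus B$ is WSP1 applies directly and yields a sequence $\Phi_n:A\oplus B\to D/J_n$ with $\pi_n\circ\Phi_n(x)\to\varphi\circ\gamma_A(x)$ for every $x$ in $A\oplus B$. Setting $\psi_n=\Phi_n\circ\iota_A$ should then finish the argument, since
\[
\pi_n\circ\psi_n(a)=\pi_n\circ\Phi_n(\iota_A(a))\longrightarrow\varphi\circ\gamma_A(\iota_A(a))=\varphi(a)
\]
for all $a$ in $A$, and the symmetric choice handles $B$.

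I do not expect any genuine obstacle here: the one point to keep an eye on is that forming $\varphi\circ\gamma_A$ and post-composing the approximate lifts with $\iota_A$ keeps the target algebra $D$ unital throughout, so no passage to a hereditary subalgebra $\overline{k D k}$—which need not be unital—is ever required. This is exactly the contrast flagged in the Remark following Theorem~\ref{thm:sumWSP_iff_eachWSP}: the converse implication would rely on precisely such a passage, and, as the abstract promises, WSP1 is not closed under direct sums. I therefore expect only this single implication to survive, and the proof of it to be an essentially verbatim transcription of the easy direction of Theorem~\ref{thm:sumWSP_iff_eachWSP}.
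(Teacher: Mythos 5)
Your proposal is correct and is essentially identical to the paper's own proof, which simply states that the forward direction of Theorem~\ref{thm:sumWSP_iff_eachWSP} works verbatim: precompose $\varphi$ with the coordinate projection, apply the WSP1 hypothesis (legitimate because the target algebra stays unital and untouched), and postcompose the resulting approximate lifts with the inclusion. Your added remark about why the argument survives the restriction to unital targets—no passage to a hereditary subalgebra is needed in this direction—is exactly the point the paper flags in the remark following Theorem~\ref{thm:sumWSP_iff_eachWSP}.
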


\begin{proof}
The proof of the forward direction of Theorem~\ref{thm:sumWSP_iff_eachWSP}
works here just as well.
\end{proof}

\begin{thm}
If $A$ is unital then $A$ is WSP1 if, and only if, $A$ WSP.
\end{thm}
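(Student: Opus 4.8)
The plan is to recognize that this theorem is now essentially a corollary of the machinery already assembled, so almost all of the work has been done. First I would dispose of the easy direction. If $A$ is WSP, then in particular $A$ can solve the partial approximate lifting problem of Definition~\ref{def:WSP} for \emph{every} separable $B$, and hence certainly in the special case where $B$ is unital. Thus $A$ is WSP1. This implication requires no hypothesis on $A$ and is purely a matter of restricting the class of allowed targets.

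The substance is the converse: assuming $A$ is unital and WSP1, I would derive that $A$ is WSP. The key structural observation is that when $A$ already has a unit $e$, the unitization splits as a direct sum of $C^{*}$-algebras, $\widetilde{A}\cong A\oplus\mathbb{C}$, via the isomorphism $(a,\lambda)\mapsto(a+\lambda e,\lambda)$; one checks directly that this map is a $*$-isomorphism with inverse $(c,\lambda)\mapsto(c-\lambda e,\lambda)$. With this identification in hand, the argument is a two-step chain through previously proved results. By Theorem~\ref{thm:WSP1_iff_tilde_A_isWSP}, the assumption that $A$ is WSP1 is equivalent to $\widetilde{A}$ being WSP, that is, $A\oplus\mathbb{C}$ is WSP. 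Then Theorem~\ref{thm:sumWSP_iff_eachWSP} converts weak semiprojectivity of the direct sum into weak semiprojectivity of each summand, so in particular $A$ is WSP, as desired.

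I would take care to record the one small fact the chain relies on beyond the cited theorems, namely that $\mathbb{C}$ is WSP; this is immediate since $\mathbb{C}$ is semiprojective (Lemma 4.2.2 in \cite{Loring-lifting-perturbing}, already invoked in the proof of Proposition~\ref{pro:A-WSP=00003D=00003D>A-tilde-WSP}) and semiprojectivity is stronger than weak semiprojectivity. In fact this fact is not strictly needed for the deduction, since Theorem~\ref{thm:sumWSP_iff_eachWSP} hands us WSP of \emph{both} summands for free, but it is reassuring for consistency. I do not expect any genuine obstacle here: the only thing to verify with care is the splitting $\widetilde{A}\cong A\oplus\mathbb{C}$ for unital $A$, and this is a routine and standard computation. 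The real content of the theorem has been pushed entirely into Theorems~\ref{thm:WSP1_iff_tilde_A_isWSP} and~\ref{thm:sumWSP_iff_eachWSP}, and what remains is to assemble them.
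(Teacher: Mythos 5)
Your proposal is correct and follows essentially the same route as the paper: the splitting $\widetilde{A}\cong A\oplus\mathbb{C}$ for unital $A$, combined with Theorem~\ref{thm:WSP1_iff_tilde_A_isWSP} and Theorem~\ref{thm:sumWSP_iff_eachWSP}. The only (harmless) difference is that you prove the direction WSP $\Rightarrow$ WSP1 directly from the definitions by restricting the class of target algebras, whereas the paper runs the equivalence chain in both directions and therefore genuinely needs the fact that $\mathbb{C}$ is WSP, which in your arrangement is, as you note, not strictly required.
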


\begin{proof}
When $A$ is unital, $\widetilde{A} \cong A\oplus\mathbb{C}$, so
by Theorem~\ref{thm:WSP1_iff_tilde_A_isWSP}
\[
A\mbox{ is WSP1 }\iff A\oplus\mathbb{C}\mbox{ is WSP}.
\]
By Theorem~\ref{thm:sumWSP_iff_eachWSP}
\[
A\oplus\mathbb{C}
\mbox{ is WSP}
\iff A\mbox{ is WSP}
\mbox{ and }
\mathbb{C}
\mbox{ is WSP}.
\]
We are done, since $\mathbb{C}$ is famously SP and so WSP.
\end{proof}

\begin{thm}
\label{thm:WSP1=00003D=00003D>AANR}
Suppose $X$ is a locally compact, metrizable space.
\begin{enumerate}
\item
If $C_{0}(X)$ is WSP1 then $\alpha X$ is an AANR. 
\item
If $\alpha X$ is an AANR then $C_{0}(X)$ is WSP1 
within the commutative category. 
\end{enumerate}
\end{thm}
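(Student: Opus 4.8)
The plan is to imitate the proof of Theorem~\ref{thm:WSP=00003D=00003D>PAANR}, translating the WSP1 lifting problem into the approximate extension property of Proposition~\ref{pro:AANRviaApproxExten} via Gelfand duality. The only structural change is that WSP1 tests lifting problems only against unital $B$; in the commutative category this means the ambient space is a \emph{compactum} $Z$ rather than a merely locally compact space, and there is no base point that must be preserved. First I would set up the dictionary. A unital separable commutative $B$ is $C(Z)$ for a compactum $Z$; an increasing chain of ideals $J_1 \triangleleft J_2 \triangleleft \cdots$ with $J = \overline{\bigcup_n J_n}$ corresponds to a decreasing chain of closed sets $W_1 \supseteq W_2 \supseteq \cdots$ with $\bigcap_n W_n = W$, where $B/J = C(W)$, $B/J_n = C(W_n)$, and $\pi_n$ is restriction $C(W_n) \to C(W)$; and a $*$-homomorphism $\varphi : C_0(X) \to C(W)$ is $f \mapsto f \circ \lambda$ for a continuous map $\lambda : W \to \alpha X$, the value $\infty$ accounting for possible degeneracy of $\varphi$. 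The crucial observation is that, because $C(W)$ is unital while $\varphi$ need not be, $\lambda$ ranges over \emph{all} continuous maps $W \to \alpha X$ with no constraint at any point; this is exactly what distinguishes the present situation from the pointed one in Theorem~\ref{thm:WSP=00003D=00003D>PAANR}.

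With the dictionary in place, a family of lifts $\varphi_n : C_0(X) \to C(W_n)$ corresponds to continuous maps $\lambda_n : W_n \to \alpha X$, and the condition $\lVert \pi_n \circ \varphi_n(f) - \varphi(f)\rVert \to 0$ for every $f \in C_0(X)$ becomes, just as in the proof of Theorem~\ref{thm:WSP=00003D=00003D>PAANR}, the statement that $f(\lambda_n(y)) \to f(\lambda(y))$ uniformly over $y \in W$ for every $f$. Since the functions in $C_0(X)$, extended by $0$ at $\infty$, separate the points of the compactum $\alpha X$, the same reasoning used there shows this is equivalent to $\lambda_n|_W \to \lambda$ uniformly in $\alpha X$. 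Reading diagram~(\ref{eqn:WSPbyLiftingDiagram}) in the commutative category thus produces precisely the extension diagram~(\ref{eqn:AANRextensionDiagram}) of Proposition~\ref{pro:AANRviaApproxExten}, with ambient compactum $Z$, closed sets $W_n \downarrow W$, and target space $\alpha X$. Hence $C_0(X)$ is WSP1 within the commutative category if, and only if, $\alpha X$ satisfies the approximate extension property of Proposition~\ref{pro:AANRviaApproxExten}, that is, if and only if $\alpha X$ is an AANR.

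This equivalence immediately gives claim~(2). For claim~(1) I would observe that WSP1, which asks for lifts against every unital $B$, specializes to WSP1 within the commutative category simply by restricting to commutative unital $B$, where each quotient $B/J_n$ is commutative and every lift $\varphi_n$ into it is automatically a commutative-category map; so if $C_0(X)$ is WSP1 then it is WSP1 within the commutative category, and $\alpha X$ is an AANR by the equivalence just established. The reindexing and zero-map padding needed to present the lifts as a full sequence $\varphi_n : C_0(X) \to B/J_n$ are routine and are handled exactly as in Proposition~\ref{pro:AANRviaApproxSplit} and in the remark following Definition~\ref{def:WSP1}.

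I expect the main obstacle to be the faithful bookkeeping of this correspondence rather than any genuine difficulty: specifically, verifying that allowing a non-unital $\varphi$ into the unital algebra $C(W)$ removes the base-point constraint entirely (so that one lands in AANR rather than PAANR), and checking that the $C^*$-algebraic convergence really does force uniform convergence of the $\lambda_n$ in $\alpha X$, including near the point $\infty$. Both are mild once the Gelfand dictionary is written out carefully, and the argument is in the end the pointed proof of Theorem~\ref{thm:WSP=00003D=00003D>PAANR} with the base point deleted.
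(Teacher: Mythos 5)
Your proposal is correct and is essentially the paper's own argument: the paper proves this theorem in one line by citing Proposition~\ref{pro:AANRviaApproxExten} and Definition~\ref{def:WSP1} together with the argument of Theorem~\ref{thm:WSP=00003D=00003D>PAANR}, which is precisely the Gelfand-duality translation you spell out, with the unitality of $B$ removing the base-point constraint so that one lands in the unpointed extension property (AANR) rather than the pointed one (PAANR). Your write-up simply makes explicit the dictionary and convergence checks that the paper leaves implicit.
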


\begin{proof}
This follows from Proposition~\ref{pro:AANRviaApproxExten} 
and Definition~\ref{def:WSP1} by essentially the argument used for
Theorem~\ref{thm:WSP=00003D=00003D>PAANR}.
\end{proof}

\begin{example}
Consider $x_{0}$ in the topologist's sine curve, as in 
Figure~\ref{fig:topologist's-sine-curve}(b).  Then
$A=C_{0}(X\setminus\{x_{0}\})$ is WSP1 since $\widetilde{A}\cong C(X)$.
However $\left(A\oplus A\right)^{\sim}\cong C(Y)$ where $Y$ is the
space in Figure~\ref{fig:two-sine-curves-Nasty} and $Y$ is not
an AANR, so $A\oplus A$ is not WSP1.
\end{example}

\bibliographystyle{plain}
\bibliography{WSPrefs}

\begin{thebibliography}{10}

\bibitem{Blackadar-shape-theory}
Bruce Blackadar.
\newblock Shape theory for {$C\sp \ast$}-algebras.
\newblock {\em Math. Scand.}, 56(2):249--275, 1985.

\bibitem{cannon2006fundamental}
J.W. Cannon and G.R. Conner.
\newblock {On the fundamental groups of one-dimensional spaces}.
\newblock {\em Topology Appl.}, 153(14):2648--2672, 2006.

\bibitem{chigogidzeNonComARs}
A.~Chigogidze and AN~Dranishnikov.
\newblock {Which compacta are noncommutative ARs?}
\newblock {\em Topology Appl.}, 157(4):774--778, 2010.

\bibitem{ClappAANR}
Michael~H. Clapp.
\newblock On a generalization of absolute neighborhood retracts.
\newblock {\em Fund. Math.}, 70(2):117--130, 1971.

\bibitem{DadarlatShapeAndE}
Marius Dadarlat.
\newblock Shape theory and asymptotic morphisms for {$C\sp *$}-algebras.
\newblock {\em Duke Math. J.}, 73(3):687--711, 1994.

\bibitem{EilersLoringContingenciesStableRelations}
S{\o}ren Eilers and Terry~A. Loring.
\newblock Computing contingencies for stable relations.
\newblock {\em Internat. J. Math.}, 10(3):301--326, 1999.

\bibitem{ELP-anticommutation}
S{\o}ren Eilers, Terry~A. Loring, and Gert~K. Pedersen.
\newblock Stability of anticommutation relations: an application of
  noncommutative {CW} complexes.
\newblock {\em J. Reine Angew. Math.}, 499:101--143, 1998.

\bibitem{ELP-pushBusby}
S{\o}ren Eilers, Terry~A. Loring, and Gert~K. Pedersen.
\newblock Morphisms of extensions of {$C\sp *$}-algebras: pushing forward the
  {B}usby invariant.
\newblock {\em Adv. Math.}, 147(1):74--109, 1999.

\bibitem{enders2011characterization}
D.~Enders.
\newblock {A characterization of weak (semi-) projectivity for commutative
  C*-algebras}.
\newblock {\em Arxiv preprint arXiv:1102.3303}, 2011.

\bibitem{HadwinLiApproxLift}
Don Hadwin and Weihua Li.
\newblock A note on approximate liftings.
\newblock {\em Oper. Matrices}, 3(1):125--143, 2009.

\bibitem{Loring-lifting-perturbing}
Terry~A. Loring.
\newblock {\em Lifting solutions to perturbing problems in {$C\sp
  *$}-algebras}, volume~8 of {\em Fields Institute Monographs}.
\newblock American Mathematical Society, Providence, RI, 1997.

\bibitem{LoringWeaklyProjective}
Terry~A. Loring.
\newblock Weakly projective {$C\sp *$}-algebras.
\newblock {\em Rocky Mountain J. Math.}, to appear.

\bibitem{mardesic2001history}
S.~Marde{\v{s}}i{\'c} and J.~Segal.
\newblock {History of shape theory and its application to general topology}.
\newblock {\em Handbook of the history of general topology}, page 1145, 2001.

\bibitem{sorensen2011characterization}
A.P.W. S{\o}rensen and H.~Thiel.
\newblock {A characterization of semiprojectivity for commutative C*-algebras}.
\newblock {\em Arxiv preprint arXiv:1101.1856}, 2011.

\bibitem{SpielbergWeakSemiprojectivity}
Jack Spielberg.
\newblock Weak semiprojectivity for purely infinite {$C\sp \ast$}-algebras.
\newblock {\em Canad. Math. Bull.}, 50(3):460--468, 2007.

\end{thebibliography}

\end{document}